\documentclass{amsart}

\usepackage[foot]{amsaddr}

\usepackage[lite,initials,nobysame,non-compressed-cites]{amsrefs}
\usepackage{amssymb}
\usepackage{hyperref}

\numberwithin{equation}{section}

\theoremstyle{plain} 
\newtheorem{thm}{Theorem}[section]
\newtheorem{cor}[thm]{Corollary}
\newtheorem{lem}[thm]{Lemma}

\theoremstyle{definition}

\newtheorem{qn}[thm]{Question}
\newtheorem*{rem}{Remark}

\renewcommand\le{\leqslant}
\renewcommand\ge{\geqslant}

\usepackage{stmaryrd}
\usepackage{xcolor}

\usepackage{array,longtable}
\newcolumntype{L}{>{$}l<{$}}
\newcolumntype{C}{>{$}c<{$}}

\begin{document}

\title[Abstract regular polytopes of $G_2(q)$]{Abstract regular polytopes of $G_2(q)$ \\ in even characteristic} 


\author[Malcolm H. W. Chen]{Malcolm Hoong Wai Chen} \address{Department of Mathematics, University of Manchester, Manchester M13 9PL, UK} \email{malcolmhoongwai.chen@manchester.ac.uk}

\subjclass{20B25, 20D06, 20G40, 52B15}
\keywords{Abstract regular polytopes, string C-groups, Chevalley simple groups}

\begin{abstract}
For every even prime power $q>2$, we explicitly construct a rank five string C-group representation of the Chevalley simple group $G_2(q)$ using its natural seven-dimensional matrix representation. We also give a census for the string C-group representations of $G_2(q)$ of ranks four and above for every prime power $3 \le q \le 11$; providing computational evidence that rank five string C-group representations of $G_2(q)$ are rare and that this construction is exceptional among the currently known examples.
\end{abstract}

\maketitle

\section{Introduction}

Let $G$ be a group and $S=\{s_1,\dots,s_r\}$ be a set of involutions generating $G$ such that $s_i$ and $s_j$ commute whenever $|i-j| \ge 2$. Then, $(G,S)$ is called a \emph{string group generated by involutions} (or \emph{SGGI} for brevity), and $r$ is the \emph{rank} of $(G,S)$. A \emph{string C-group} is a SGGI $(G,S)$ such that $\langle s_i : i \in I \rangle \cap \langle s_j : j \in J \rangle = \langle s_k : k \in I \cap J \rangle$ for every $I,J \subseteq \{1,\dots,r\}$; here $S$ is called a \emph{C-string} and its \emph{Schl\"{a}fli type} is the sequence $[p_1,\dots,p_{r-1}]$ where $p_i=|s_is_{i+1}|$ for every $1 \le i \le r-1$. As documented in McMullen and Schulte's monograph \cite[Section 2E]{McMullenSchulte-arpbible} there is a one-to-one correspondence between C-strings of $G$ and abstract regular polytopes whose automorphism group is $G$. As such, investigations on string C-groups are of interest not only in group theory but also in incidence geometry. 

The quest to classify string C-groups started from early experimental results \cite{Hartley-atlas,LVauthier-atlas} which resulted in atlases of abstract regular polytopes for small groups. More efficient computer algorithms have since been developed to enumerate the C-strings of sporadic simple groups \cite{HartleyHulpke-algorithm,LM-algorithm,LMulpas-algorithm}. The possible ranks of C-strings have been investigated for some infinite families of (almost) simple groups, namely Suzuki groups ${}^{2}\mathrm{B}_2(q)$ \cite{L-Sz}, small Ree groups ${}^{2}\mathrm{G}_2(q)$ \cite{LSchulteVanMaldeghem-Ree}, linear groups of dimension at most four \cite{ConnorDeSaedeleerL-sl2socle,LSchulte-sl2,LSchulte-gl2,BVicinsky-gl3,BL-psl4}, orthogonal and symplectic groups \cite{BFerraraL-orthogonal,B-orthogonal}, as well as symmetric and alternating groups \cite{FL-symmetric,CFLM-alternating,FL-alternating}. A complete list of simple groups that do not admit any C-strings has been determined in \cite{LVandenschrick}. See the survey article by Leemans \cite{L-arpsurvey} for more details of these investigations, and Brooksbank \cite[Figures 1 \& 2]{B-orthogonal} for a summary on the possible ranks of these groups. 

Among the exceptional groups of Lie type, the Suzuki groups and the small Ree groups constitute the Lie rank one families, and they only admit rank three C-strings. Thus, the Chevalley simple groups $G_2(q)$, which are the smallest exceptional groups of Lie rank two, form the next natural family to investigate.

The main theorem of this paper is as follows.

\begin{thm} \label{main}
For every even prime power $q>2$, the Chevalley simple group $G_2(q)$ admits a C-string $\{t_1,t_2,t_3,t_4,t_5\}$ with Schl\"{a}fli type $[q+1,3,8,3]$
such that $$\langle t_1,t_2,t_3 \rangle \cong \mathrm{SL}_2(q), \ \ \langle t_1,t_2,t_3,t_4 \rangle \cong \mathrm{SL}_3(q):2, \ \text{and} \ \ \langle t_2,t_3,t_4,t_5 \rangle \cong G_2(2).$$
\end{thm}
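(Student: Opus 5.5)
We will work inside the natural $7$-dimensional representation of $G_2(q)$, $q$ even, viewed as the stabiliser in $\mathrm{SO}_7(q)\cong \mathrm{Sp}_6(q)$ of a suitable alternating trilinear form (or octonion product). We first write down five explicit $7\times 7$ involutions $t_1,\dots,t_5$ over $\mathbb{F}_q$, with one parameter $\lambda\in\mathbb{F}_q$ chosen so that $t_1t_2$ has order $q+1$. Membership in $G_2(q)$ is checked directly by verifying that each $t_i$ preserves the trilinear form. The choice will make $t_2,\dots,t_5$ have entries in $\mathbb{F}_2$, so $H:=\langle t_2,t_3,t_4,t_5\rangle$ lies in $G_2(2)$. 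The string relations $[t_i,t_j]=1$ for $|i-j|\ge 2$, $t_i^2=1$, and the orders $|t_2t_3|=3$, $|t_3t_4|=8$, $|t_4t_5|=3$ are all finite matrix computations. The order $|t_1t_2|=q+1$ will follow by arranging that $t_1t_2$ acts on a $2$-dimensional invariant subspace with characteristic polynomial $x^2+\lambda x+1$ irreducible over $\mathbb{F}_q$, with $\lambda$ chosen so the roots have order $q+1$, and acting trivially or compatibly on a complement.

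Next we identify the subgroups. The group $H$ is a rank-four SGGI with type $[3,8,3]$ inside $G_2(2)\cong \mathrm{U}_3(3):2$, and we check (finite computation, independent of $q$) that it is all of $G_2(2)$ and that it is a string C-group. For $\langle t_1,t_2,t_3\rangle$ we exhibit a common invariant decomposition $V=U\oplus U'\oplus W$ on which it acts as $\mathrm{SL}_2(q)$: two involutions generating a dihedral group of order $2(q+1)$ together with $t_3$ generate an irreducible subgroup of $\mathrm{SL}_2(q)$ on $U$, and Dickson's classification of subgroups of $\mathrm{SL}_2(q)$ (the order $q+1$ element and the order-$3$ product $t_2t_3$ rule out dihedral and other proper subgroups when $q>2$; subfield subgroups are excluded because $\lambda$ generates $\mathbb{F}_q$ or because the order $q+1$ element forces it) gives $\mathrm{SL}_2(q)$. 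For $K=\langle t_1,\dots,t_4\rangle$, we note that $G_2(q)$ has a maximal subgroup $\mathrm{SL}_3(q):2$, the stabiliser of a nonsingular (in characteristic $2$, the decomposition $V=\langle v\rangle\oplus E\oplus E^*$) structure; we show all $t_i$, $i\le4$, preserve a fixed pair of totally singular $3$-spaces $\{E,E^*\}$, with $t_4$ swapping them and $t_1,t_2,t_3$ fixing them. Then $K\cap \mathrm{SL}_3(q)$ is generated by $\langle t_1,t_2,t_3\rangle\cong\mathrm{SL}_2(q)$ and its conjugate under $t_4$, and we prove it acts irreducibly on $E$; using the list of maximal subgroups of $\mathrm{SL}_3(q)$ (Mitchell/Bloom), an irreducible subgroup containing $\mathrm{SL}_2(q)$ embedded naturally and not preserving a form or subfield must be $\mathrm{SL}_3(q)$. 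Finally $G=\langle t_1,\dots,t_5\rangle$ contains the maximal subgroup $\mathrm{SL}_3(q):2$ properly (since $t_5\notin K$, e.g.\ it moves $\{E,E^*\}$), so $G=G_2(q)$ by maximality (Cooperstein's list for $q$ even).

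For the intersection property we use the standard reduction: since $\langle t_1,\dots,t_4\rangle$ and $\langle t_2,\dots,t_5\rangle$ are string C-groups, it suffices to show $\langle t_1,\dots,t_4\rangle\cap\langle t_2,\dots,t_5\rangle=\langle t_2,t_3,t_4\rangle$. That $K$ and $H$ are C-groups: $H$ by the finite check; $K$ by the same reduction, needing $\langle t_1,t_2,t_3\rangle$ a C-group (rank three in $\mathrm{SL}_2(q)$, where intersection $\langle t_1,t_2\rangle\cap\langle t_2,t_3\rangle=\langle t_2\rangle$ follows since dihedral groups of orders $2(q+1)$ and $6$ meet in a group of order dividing $2\cdot\gcd(q+1,3)$, handled by a direct check) and $\langle t_1,t_2,t_3\rangle\cap\langle t_2,t_3,t_4\rangle=\langle t_2,t_3\rangle$. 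The key intersection $K\cap H=(\mathrm{SL}_3(q):2)\cap G_2(2)$ is computed by noting $H\le G_2(2)$ stabilises the $\mathbb{F}_2$-structure, so $K\cap H\le \mathrm{SL}_3(2):2$-type subgroup, then showing it equals $\langle t_2,t_3,t_4\rangle$ (which has order $|\mathrm{SL}_3(2):2|=336$ presumably, type $[3,8]$). The main obstacle is this last step together with generation of $\mathrm{SL}_3(q)$: one must show $K\cap G_2(2)$ is no larger than the $\mathbb{F}_2$-points of $\mathrm{SL}_3:2$, which I would attempt by showing $K\cap G_2(2)$ stabilises $\{E,E^*\}$ and hence lies in $\mathrm{Stab}_{G_2(2)}\{E,E^*\}\cong \mathrm{SL}_3(2):2=\langle t_2,t_3,t_4\rangle$.
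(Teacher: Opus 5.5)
The genuine gap is the rank-four intersection $\langle t_1,t_2,t_3\rangle\cap\langle t_2,t_3,t_4\rangle=\langle t_2,t_3\rangle$. You only list it as a requirement. Since you never write down the matrices, nothing in your outline controls it, and it is exactly what the choice of $t_3$ has to be engineered for.

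By contrast, the step you flag as the main obstacle is fine. Bounding $\langle t_1,\dots,t_4\rangle\cap\langle t_2,\dots,t_5\rangle$ by $\mathrm{Stab}_{G_2(2)}\{E,E^*\}$ is in effect the paper's use of Lemma~\ref{ip}(2): $t_5\notin\langle t_1,\dots,t_4\rangle$ together with maximality of $\langle t_2,t_3,t_4\rangle$ in $G_2(2)$.

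The missing idea is to choose $t_1,t_2,t_3$ inside $C_G(t_5)=K\rtimes L$ (the paper's notation). Then the intersection lies in $C_{\langle t_2,t_3,t_4\rangle}(t_5)=\langle t_2,t_3,s^2\rangle\cong\mathrm{Sym}(4)$, where $s=(t_3t_4)^2$. Since $\langle t_2,t_3\rangle\cong\mathrm{Sym}(3)$ is maximal there, everything reduces to showing $s^2\notin\langle t_1,t_2,t_3\rangle$. The paper proves this with the quadratic form $Q(v)=v_2^2+v_3^2+\lambda v_2v_3+(1+\lambda)v_7^2$. It is preserved by $t_1,t_2,t_3$ but not by $s^2$, because $\lambda\neq1$; this is one place where $q>2$ is used.

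The same fact is what your identification $\langle t_1,t_2,t_3\rangle\cong\mathrm{SL}_2(q)$ lacks. You posit a $2$-dimensional invariant direct summand and never check that the group acts faithfully on it. For the paper's involutions there is no such summand: $\langle t_1,t_2,t_3\rangle$ is a non-standard complement to $K_{bc}\cong q^2$ in $K_{bc}L\cong q^2:\mathrm{SL}_2(q)$. Inside $\langle y_1,y_5,y_6\rangle$, the natural submodule $\langle y_5,y_6\rangle$ has no invariant complement.

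The isomorphism comes instead from the projection $\pi\colon K_{bc}L\to L$:
\begin{itemize}
\item It is onto because $\langle t_1,t_2,s^2\rangle=L$, since the dihedral group $\langle t_1,t_2\rangle$ of order $2(q+1)$ is maximal in $L$.
\item Its kernel $\langle t_1,t_2,t_3\rangle\cap K_{bc}$ is $1$ or $K_{bc}$ by irreducibility of $K_{bc}$.
\item The kernel is not $K_{bc}$, because $t_3s^2\in K_{bc}$ while $s^2\notin\langle t_1,t_2,t_3\rangle$.
\end{itemize}
Once $\pi$ is known to be injective there, your $\mathbb{F}_2$-rationality idea would also give the rank-four intersection directly. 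An $\mathbb{F}_2$-rational element of $\langle t_1,t_2,t_3\rangle$ projects into $L\cap\mathrm{GL}_7(2)=\langle t_2,s^2\rangle=\pi(\langle t_2,t_3\rangle)$.

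Finally, your recognition criterion for $\mathrm{SL}_3(q)$ fails at $q=4$. The subgroup $3.A_6<\mathrm{SL}_3(4)$ is irreducible, preserves no form, is not a subfield subgroup, and contains $A_5\cong\mathrm{SL}_2(4)$. The paper excludes it because the group contains $\langle t_1,t_2,t_3,s^2\rangle\cong 4^2:\mathrm{SL}_2(4)$, whose order $960$ does not divide $1080$.
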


The string C-group representation given in Theorem \ref{main} also allow us to construct C-string of lower ranks via a rank reduction theorem of Brooksbank and Leemans \cite{BL-rankreduction}. Thus, we obtain the following corollary.

\begin{cor} \label{notsomain}
For every even prime power $q>2$, the Chevalley simple group $G_2(q)$ admits a string C-group representation of ranks three and four with Schl\"{a}fli type $[8,2(q+1)]$ and $[12,6,q+1]$ respectively.
\end{cor}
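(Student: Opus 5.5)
The statement to prove is Corollary \ref{notsomain}. I would derive it from Theorem \ref{main} by applying the rank reduction theorem of Brooksbank and Leemans \cite{BL-rankreduction}. As I recall it, the theorem says the following. Let $\{\rho_0,\dots,\rho_{n-1}\}$ be a C-string of rank $n\ge 4$ for a group $G$, and suppose $\rho_0 \in \langle \rho_0\rho_2, \rho_3\rangle$ (equivalently, $\langle \rho_0,\rho_2,\rho_3\rangle=\langle \rho_0\rho_2,\rho_3\rangle$, a dihedral group). Then $\{\rho_1,\rho_0\rho_2,\rho_3,\dots,\rho_{n-1}\}$ is a C-string of rank $n-1$ for $G$.

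\textbf{Rank four.} Start from the rank five C-string $\{t_1,\dots,t_5\}$ of type $[q+1,3,8,3]$ and read it from the end: $\rho_0=t_5$, $\rho_1=t_4$, $\rho_2=t_3$, $\rho_3=t_2$, $\rho_4=t_1$.
\begin{itemize}
\item The hypothesis needs $t_5\in\langle t_5t_3, t_2\rangle$. Since $t_5t_3$ is an involution commuting with $t_2t_3$-type structure, $|t_5t_3\cdot t_2|$ should be computed in $\langle t_2,t_3,t_5\rangle$.
\item This subgroup is $\langle t_2,t_3\rangle\times\langle t_5\rangle\cong S_3\times C_2\cong D_{12}$.
\item Here $t_5t_3\cdot t_2$ has order $6$. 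So $\langle t_5t_3,t_2\rangle$ is a dihedral group of order $12$ inside a group of order $12$, hence equals $\langle t_2,t_3,t_5\rangle$ and contains $t_5$.
\end{itemize}
The new string $\{t_4,\,t_5t_3,\,t_2,\,t_1\}$ then has the following products:
\begin{itemize}
\item $|t_4t_5t_3|$: this lives in $\langle t_3,t_4,t_5\rangle$. I expect to identify that group from the $G_2(2)$ structure (the type $[8,3]$ parabolic-like subgroup) and compute the order as $12$.
\item $|t_5t_3t_2|=6$, as above.
\item $|t_2t_1|=q+1$.
\end{itemize}
This gives type $[12,6,q+1]$, matching the claim.

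\textbf{Rank three.} Apply the theorem again to the rank four string $\{t_4,t_5t_3,t_2,t_1\}$, now reading it from the other end: $\rho_0=t_1$, $\rho_1=t_2$, $\rho_2=t_5t_3$, $\rho_3=t_4$.
\begin{itemize}
\item The hypothesis needs $t_1\in\langle t_1t_5t_3, t_4\rangle$. Since $t_1$ commutes with $t_3,t_4,t_5$, the group $\langle t_1,t_5t_3,t_4\rangle$ is $\langle t_5t_3,t_4\rangle\times\langle t_1\rangle\cong D_{24}\times C_2$, using $|t_4t_5t_3|=12$.
\item The element $t_4\cdot t_1t_5t_3$ has order $\mathrm{lcm}(12,2)=12$. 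This is an order clash: $\langle t_1t_5t_3,t_4\rangle$ has order $24$, while $D_{24}\times C_2$ has order $48$.
\item So the reduction would need instead $\rho_0=t_4$, i.e.\ the original orientation, with condition $t_4\in\langle t_4t_2,t_1\rangle$. Since $t_4$ commutes with $t_1,t_2$, this is $\langle t_1,t_2\rangle\times\langle t_4\rangle\cong D_{2(q+1)}\times C_2$. Because $q+1$ is odd, this group is dihedral of order $4(q+1)$, generated by $t_4t_2$ and $t_1$, with $|t_4t_2t_1|=2(q+1)$.
\end{itemize}
The resulting rank three string is $\{t_5t_3,\,t_4t_2,\,t_1\}$. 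Its products are:
\begin{itemize}
\item $|t_5t_3t_4t_2|$, to be computed inside $\langle t_2,t_3,t_4,t_5\rangle\cong G_2(2)$; I expect it to be $8$.
\item $|t_4t_2t_1|=2(q+1)$.
\end{itemize}
This gives type $[8,2(q+1)]$.

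The main obstacle is that the orders $|t_4t_5t_3|=12$ and $|t_3t_5\,t_4t_2|=8$ must be verified inside $G_2(2)=\langle t_2,\dots,t_5\rangle$. Since this group does not depend on $q$, I would settle it by a single explicit computation with the matrices from the construction over $\mathbb{F}_2$, or by a direct check in $G_2(2)$. Equally important is confirming the precise form of the Brooksbank–Leemans hypothesis. In both applications, the key point is that $q+1$ odd, resp.\ $3$ odd, makes the relevant direct product dihedral.
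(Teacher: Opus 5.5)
Your proposal is correct and follows the paper's proof. You apply the Brooksbank--Leemans reduction to the dual string $\{t_5,t_4,t_3,t_2,t_1\}$ to obtain $\{t_4,t_5t_3,t_2,t_1\}$, and then apply it again in the same orientation to obtain $\{t_5t_3,t_4t_2,t_1\}$. Your direct check of $\rho_0\in\langle\rho_0\rho_2,\rho_3\rangle$ via the dihedral direct products is exactly what the odd-order hypothesis ($|t_2t_3|=3$, $|t_1t_2|=q+1$) in Lemma~\ref{rankred} guarantees. The two orders you left open, $|t_4t_5t_3|=12$ and $|t_5t_3t_4t_2|=8$, are the same ``direct calculation'' the paper relies on, and they do check out; for example, $(t_4t_3t_5)^6=s^2$.
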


\begin{rem}
The construction and proof presented here was completed independently prior to the author became aware of a very recent work of Gvozdev and Nuzhin \cite{GvozdevNuzhin-g2} which also established the existence of ranks three, four, and five string C-group representations of $G_2(q)$ for every even prime power $q>2$. In particular, the dual of their rank five C-string has Schl\"{a}fli type $[q+1,4,8,3]$ and is such that $\langle t_1,t_2,t_3 \rangle \cong q^2:D_{q+1}$. Likewise, the dual of their ranks three and four C-strings, which were obtained without appealing to the rank reduction theorem has Schl\"{a}fli type $[7,2(q+1)]$ and $[4,8,q+1]$ respectively. Consequently, their constructions is neither isomorphic nor dual to the constructions presented in this paper. We will compare and discuss both constructions in this paper.
\end{rem}

\subsection*{Outline} In Section \ref{prelim}, we recall the necessary background on string C-groups and a seven-dimensional matrix representation of $G_2(q)$. Then, in Section \ref{proof}, we use this representation to construct five involutions $t_1,\dots,t_5$ and show that $\{t_1,\dots,t_5\}$ is a rank five C-string of $G_2(q)$ when $q > 2$ is even, thereby proving Theorem \ref{main}. The key ingredient of the proof is to establish the following subgroup chain, with the involutions carefully chosen so as to ensure that the intersection condition holds.
\begin{equation*}
\begin{aligned}
&\underset{\mathrm{SL}_2(q)}
        {\langle t_1,t_2,t_3\rangle}
<
\underset{q^2:\mathrm{SL}_2(q)}
        {\langle t_1,t_2,t_3,(t_3t_4)^4\rangle}
<
\underset{\mathrm{SL}_3(q)}
        {\langle t_1,t_2,t_3,(t_3t_4)^2\rangle}
<
\underset{\mathrm{SL}_3(q):2}
        {\langle t_1,t_2,t_3,t_4\rangle}
<
\underset{G_2(q)}
        {\langle t_1,t_2,t_3,t_4,t_5\rangle}.
\end{aligned}
\end{equation*}
Finally, in Section \ref{conclusion}, we give computational census data for C-strings of $G_2(q)$ for every prime power $3 \le q \le 11$, which suggests that rank five C-strings of $G_2(q)$ are remarkably rare and belong to only two uniform families, namely the family constructed in this paper and that of Gvozdev and Nuzhin \cite{GvozdevNuzhin-g2}. We also discuss some directions for future research: whether five is the maximum rank in the even characteristic case and what observations can be drawn about the odd characteristic case. Throughout, we use group-theoretic notation following the ATLAS of Finite Groups \cite{atlas} and naming conventions returned by the \texttt{GroupName} command in \textsc{Magma} \cite{magma}. We also refer to the work of Bray, Holt, and Roney-Dougal \cite{BrayHoltRoneyDougal-maxbible} as a standard comprehensive reference for maximal subgroups of low-dimensional classical groups.

\section{Preliminaries} \label{prelim}

We begin by recalling a standard lemma which we will use throughout the paper to verify the intersection condition for string C-groups.

\begin{lem}{\cite[2E16(a) \& 11A10]{McMullenSchulte-arpbible}} \label{ip}
Let $(\Gamma,\{ s_1,\dots,s_r \})$ be a sggi where both $\{s_1,\dots,s_{r-1}\}$ and $\{s_2,\dots,s_r\}$ are C-strings. Then $\Gamma$ is a string C-group if either of the following conditions is satisfied.
\begin{enumerate}
\item $\langle s_1,\dots,s_{r-1} \rangle \cap \langle s_2,\dots,s_r \rangle = \langle s_2,\dots,s_{r-1} \rangle$.
\item $s_r \notin \langle s_1,\dots,s_{r-1} \rangle$, and $\langle s_2,\dots,s_{r-1} \rangle$ is maximal in $\langle s_2,\dots,s_r \rangle$. 
\end{enumerate}
\end{lem}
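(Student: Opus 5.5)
The plan is to prove (2) by reducing it to (1), and to prove (1) by a case analysis on index sets that uses the string structure together with the C-property of the two maximal parabolic subgroups. Throughout, write $\Gamma_I=\langle s_i : i\in I\rangle$ for $I\subseteq\{1,\dots,r\}$. Also put $A=\Gamma_{\{1,\dots,r-1\}}$, $B=\Gamma_{\{2,\dots,r\}}$ and $C=\Gamma_{\{2,\dots,r-1\}}$.

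\emph{Reduction of (2) to (1).} The intersection $H=A\cap B$ is a subgroup of $B$ that contains $C$. By maximality of $C$ in $B$, either $H=C$ or $H=B$. If $H=B$, then $s_r\in B\subseteq A$, contrary to hypothesis. Hence $H=C$, which is condition (1).

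\emph{Proof of (1).} We must show $\Gamma_I\cap\Gamma_J=\Gamma_{I\cap J}$ for all $I,J$. The inclusion $\supseteq$ is trivial, and the claim is trivial if $I$ or $J$ is the full index set. If $I,J\subseteq\{1,\dots,r-1\}$ or $I,J\subseteq\{2,\dots,r\}$, it is exactly the C-property of $A$ or of $B$.

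Next suppose $r\notin I$ and $1\notin J$. Then $\Gamma_I\cap\Gamma_J\subseteq A\cap B=C$. Since $A$ is a C-group, $\Gamma_I\cap C=\Gamma_{I\setminus\{1\}}$, and since $B$ is a C-group, $\Gamma_J\cap C=\Gamma_{J\setminus\{r\}}$. Both of these groups lie in $C$, which is itself a C-group as a parabolic subgroup of $A$. Intersecting them there gives $\Gamma_{I\cap J}$.

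Now suppose $r\notin I$ but $\{1,r\}\subseteq J$. Then $J$ misses some $k$ with $1<k<r$. The string condition gives $\Gamma_J=\Gamma_{J_{<k}}\Gamma_{J_{>k}}$, where the two factors commute elementwise. Take $g=xy\in\Gamma_I\cap\Gamma_J$ with $x\in\Gamma_{J_{<k}}\subseteq A$ and $y\in\Gamma_{J_{>k}}\subseteq B$. Then $y=x^{-1}g\in A\cap B=C$. By the C-property of $B$, this forces $y\in\Gamma_{J_{>k}\setminus\{r\}}$. Hence $g\in\Gamma_{J\setminus\{r\}}\subseteq A$, and we conclude inside $A$. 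The case $1\notin I$ is symmetric, with the roles of $A$ and $B$ exchanged.

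The remaining case, and the one I expect to be the main obstacle, is when both $I$ and $J$ contain $1$ and $r$, so $I$ misses some $k$ and $J$ misses some $l$ with $1<k,l<r$. Here I would first factor $g\in\Gamma_I\cap\Gamma_J$ as $g=xy$ along the gap $k$ of $I$, with $x\in\Gamma_{I_{<k}}\subseteq A$ and $y\in\Gamma_{I_{>k}}$. Then I would use the already established mixed cases, for instance $\Gamma_{I_{<k}}$ against $\Gamma_J$, to push $y$ into a parabolic subgroup indexed inside $J$. If needed, I would induct on $|I|+|J|$, or equivalently on the number of gaps, to reduce to the previous cases. This is precisely where the argument of \cite[2E16]{McMullenSchulte-arpbible} does its work, and I would follow it.
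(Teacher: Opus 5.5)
\textbf{Where the proposal falls short.} The paper gives no proof of its own for this lemma; it only cites McMullen--Schulte. Your reduction of (2) to (1) is the standard argument and is correct. Your treatment of (1) is also correct whenever $I,J$ lie in a common maximal parabolic, and in both mixed cases.

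The genuine gap is the last case, $1,r\in I\cap J$ with $I,J$ proper, which you leave unproved. This case matters: for $r=4$ it already contains pairs such as $I=\{1,3,4\}$, $J=\{1,2,4\}$, and these are needed when Lemma~\ref{g1234} invokes part (1). Your sketch also fails as stated. After writing $g=xy$ along a gap $k$ of $I$, nothing places $x$ or $y$ individually in $\Gamma_J$. So the mixed case ``$\Gamma_{I_{<k}}$ against $\Gamma_J$'' cannot be applied to $x$, and the proposed induction on $|I|+|J|$ is never specified.

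\textbf{How to close the gap.} Factor $g\in\Gamma_I\cap\Gamma_J$ twice and compare the two factorizations.

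\emph{Setup.} Pick $k\notin I$ and $l\notin J$ with $1<k,l<r$. By symmetry in $I$ and $J$, assume $k\le l$. Write $g=xy=x'y'$ with $x\in\Gamma_{I_{<k}}$, $y\in\Gamma_{I_{>k}}$, $x'\in\Gamma_{J_{<l}}$ and $y'\in\Gamma_{J_{>l}}$.

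\emph{The connecting element.} Put $c:=x'^{-1}x=y'y^{-1}$. It lies in $\Gamma_{\{1,\dots,l-1\}}\cap\Gamma_{\{k+1,\dots,r\}}$. By your case ($r\notin I$, $1\notin J$) this equals $\Gamma_M$ with $M=\{k+1,\dots,l-1\}$; in particular $c=1$ when $l\le k+1$.

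\emph{The outer factors.} First, $x=x'c\in\Gamma_{I_{<k}}\cap\Gamma_{J_{<l}\cup M}=\Gamma_{(I\cap J)_{<k}}$ by the C-property of $A$. Second, $y'=cy\in\Gamma_{J_{>l}}\cap\Gamma_{M\cup I_{>k}}=\Gamma_{(I\cap J)_{>l}}$ by the C-property of $B$.

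\emph{The middle factor.} It follows that $c^{-1}=x^{-1}gy'^{-1}\in\Gamma_M\cap\Gamma_I\cap\Gamma_J$. Since $M\subseteq\{2,\dots,r-1\}$, two applications of your case ($r\notin I$, $\{1,r\}\subseteq J$) give $c\in\Gamma_{M\cap I\cap J}$.

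\emph{Conclusion.} Hence $g=xc^{-1}y'\in\Gamma_{I\cap J}$. This completes (1) directly, with no induction needed.
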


We give a natural description of $G_2(q)$ as a seven-dimensional matrix group, following Wilson's approach in \cite[Sections 4.3 \& 4.4]{Wilson-cfsgbible}. Recall that $G_2(q)$ is the automorphism group of an octonion algebra over $\mathbb{F}_q$, and hence may be realized as a matrix group via its action on the octonions.

Henceforth, fix an even prime power $q>2$. Working over characteristic two, we can choose a basis $\{x_1,\dots,x_8\}$ such that $x_4+x_5=1$ and $\langle x_4+x_5 \rangle^\perp = \langle x_1,x_2,x_3,x_4+x_5,x_6,x_7,x
_8 \rangle$. See Table \ref{octonion} for the multiplication table of this basis.

\begin{table}[h!]
\caption{Multiplication in the octonion basis $\{x_1,\dots,x_8\}$.}
\begin{tabular}{|c|cccccccc|} \hline 
& $x_1$ & $x_2$ & $x_3$ & $x_4$ & $x_5$ & $x_6$ & $x_7$ & $x_8$ \\ \hline
$x_1$ &  &  &  &  & $x_1$ & $x_2$ & $x_3$ & $x_4$ \\
$x_2$ &  &  & $x_1$ & $x_2$ &  &  & $x_5$ & $x_6$ \\
$x_3$ &  & $x_1$ &  & $x_3$ &  & $x_5$ &  & $x_7$ \\
$x_4$ & $x_1$ &  &  & $x_4$ &  & $x_6$ & $x_7$ &  \\
$x_5$ &  & $x_2$ & $x_3$ &  & $x_5$ &  &  & $x_8$ \\
$x_6$ & $x_2$ &  & $x_4$ &  & $x_6$ &  & $x_8$ &  \\
$x_7$ & $x_3$ & $x_4$ &  &  & $x_7$ & $x_8$ &  &  \\
$x_8$ & $x_5$ & $x_6$ & $x_7$ & $x_8$ &  &  &  &  \\ \hline
\end{tabular} \label{octonion}
\end{table}

The associated bilinear form $f$ satisfies $f(x_i,x_{9-i})=1$ for $i \in \{1,\dots,8\}$ and $f=0$ otherwise. Let $V$ be the vector space $\langle x_4+x_5\rangle^\perp$, and rewrite its basis as $\{y_1,\dots,y_7\}$ where
\begin{equation*}
y_i = 
\begin{cases}
x_i, & i=1,2,3, \\
x_4+x_5, & i=4, \\
x_{i+1}, & i=5,6,7.
\end{cases}
\end{equation*}

We regard vectors in $V$ as row vectors, so $G_2(q)$ acts on $V$ by right multiplication. Thus, we write $v^g$ for the image of $v \in V$ under the action of $g \in G_2(q)$. We also write $u \cdot v$ for the octonion multiplication of $u$ and $v$. Note that $f$ gives us a form completely defined on the basis vectors by
$f(y_i \cdot y_j , y_k) = 1$ if $$\{i,j,k\} \in \big\{ \{1,4,7\}, \{1,5,6\}, \{2,3,7\}, \{2,4,6\}, \{3,4,5\} \big\},$$
and $0$ otherwise. We can now define $$G = \{g \in \mathrm{GL}_7(q) : f(u^g \cdot v^g , w^g) = f(u \cdot v , w) \text{ for every $u,v,w \in V$} \}.$$
Then $G \cong G_2(q)$, and the following formula allows us to use the form $f$ to check whether a matrix in $\mathrm{GL}_7(q)$ also belongs to $G$. To reduce notation clutter, we write $(u,v)_{ij}$ for $u_iv_j+u_jv_i$, and note that $(u,v)_{ii}=0$ since we are in characteristic two.
\begin{lem} \label{form}
Let $u=\sum_{i=1}^7 u_i y_i$, $v=\sum_{i=1}^7 v_i y_i$, $w=\sum_{i=1}^7 w_i y_i \in V$. Then
\begin{align*}
f(u \cdot v , w) =& \big( (u,v)_{47} + (u,v)_{56} \big) w_1 \\
+& \big( (u,v)_{37} + (u,v)_{46} \big) w_2
\\
+& \big( (u,v)_{27} + (u,v)_{45} \big) w_3
\\
+& \big( (u,v)_{17} + (u,v)_{26} + (u,v)_{35} \big) w_4
\\
+& \big( (u,v)_{16} + (u,v)_{34} \big) w_5
\\
+& \big( (u,v)_{15} + (u,v)_{24} \big) w_6
\\
+& \big( (u,v)_{14} + (u,v)_{23} \big) w_7.
\end{align*}
\end{lem}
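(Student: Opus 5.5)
The plan is a direct expansion by trilinearity, followed by bookkeeping. Octonion multiplication is bilinear and $f$ is bilinear, so $(u,v,w)\mapsto f(u\cdot v,w)$ is trilinear on $V$. Writing $u=\sum u_iy_i$, $v=\sum v_jy_j$ and $w=\sum w_ky_k$, this gives
\begin{equation*}
f(u\cdot v,w)=\sum_{i,j,k=1}^{7} u_iv_jw_k\, f(y_i\cdot y_j,y_k).
\end{equation*}
Everything therefore reduces to the structure constants $f(y_i\cdot y_j,y_k)$. The paper records these: the value is $1$ exactly when $\{i,j,k\}$ is one of the five listed $3$-element sets, and $0$ otherwise. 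In particular the value depends only on the set $\{i,j,k\}$, so it is symmetric in $i,j$, and it vanishes whenever two indices coincide.

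If one wants to verify these constants rather than take them as given, I would compute directly from Table~\ref{octonion}. One substitutes $y_4=x_4+x_5$ and the relabelling $y_5,y_6,y_7=x_6,x_7,x_8$. Then one reads off each product and pairs it against $y_k$ using $f(x_i,x_{9-i})=1$. For example, $y_1\cdot y_4=x_1\cdot x_4+x_1\cdot x_5=x_1$, and $f(x_1,x_8)=f(y_1,y_7)=1$. This is the only step needing care, because of the mixed vector $y_4$ and because characteristic two makes the sign conventions collapse. Still, it is a finite check of $7^3$ entries, most of which vanish.

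Granting the constants, I would collect terms according to $k$, that is, by the coefficient of $w_k$. Since the constants vanish when $i=j$ and are symmetric in $i,j$, the coefficient of $w_k$ is
\begin{equation*}
\sum_{\{i,j\}} (u_iv_j+u_jv_i)=\sum_{\{i,j\}}(u,v)_{ij},
\end{equation*}
where the sum runs over the unordered pairs $\{i,j\}$ with $\{i,j,k\}$ among the five triples.

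It remains to list, for each $k$, the triples containing $k$:
\begin{itemize}
\item $k=1$: the triples $\{1,4,7\},\{1,5,6\}$, giving pairs $47,56$;
\item $k=2$: the triples $\{2,3,7\},\{2,4,6\}$, giving pairs $37,46$;
\item $k=3$: the triples $\{2,3,7\},\{3,4,5\}$, giving pairs $27,45$;
\item $k=4$: the triples $\{1,4,7\},\{2,4,6\},\{3,4,5\}$, giving pairs $17,26,35$;
\item $k=5$: the triples $\{1,5,6\},\{3,4,5\}$, giving pairs $16,34$;
\item $k=6$: the triples $\{1,5,6\},\{2,4,6\}$, giving pairs $15,24$;
\item $k=7$: the triples $\{1,4,7\},\{2,3,7\}$, giving pairs $14,23$.
\end{itemize}
These are exactly the seven coefficients in the statement, which completes the proof.
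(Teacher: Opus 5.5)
Your proposal is correct and follows the paper's proof: expand $f(u\cdot v,w)$ by trilinearity as $\sum_{i,j,k} u_iv_jw_k\, f(y_i\cdot y_j,y_k)$, use that these structure constants equal $1$ exactly on the five listed triples, and collect the coefficient of each $w_k$ by listing the pairs $\{i,j\}$ that complete $k$ to a triple. Your optional check of the constants against Table~\ref{octonion} (e.g.\ $y_1\cdot y_4=x_1$) goes slightly beyond the paper, which simply takes these constants as given.
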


\begin{proof}
Let $I=\big\{ \{1,4,7\}, \{1,5,6\}, \{2,3,7\}, \{2,4,6\}, \{3,4,5\} \big\}$. Then
$$f(u \cdot v , w) = \sum_{i=1}^7\sum_{j=1}^7\sum_{k=1}^7 u_iv_jw_kf(y_i \cdot y_j , y_k) = \sum_{\{i,j,k\} \in I} u_i v_j w_k.$$
For each fixed $k$, we determine all possible pairs of $\{i,j\}$ in $I$ to determine the corresponding coefficient of $w_k$.
\end{proof}

We also make use of information on involution centralizers of $G$ given in \cite{scottellis}. Let \\
\begin{center}
$t=\begin{pmatrix} 1 & 0 & 0 & 1 & 0 & 0 & 1 \\ 0 & 1 & 0 & 0 & 1 & 0 & 0 \\ 0 & 0 & 1 & 0 & 0 & 1 & 0 \\ 0 & 0 & 0 & 1 & 0 & 0 & 0 \\ 0 & 0 & 0 & 0 & 1 & 0 & 0 \\ 0 & 0 & 0 & 0 & 0 & 1 & 0 \\ 0 & 0 & 0 & 0 & 0 & 0 & 1 \end{pmatrix}$, \\[0.2in]
$K = \left\{ \begin{pmatrix} 1 & 0 & 0 & a & b & c & a^2 \\ 0 & 1 & 0 & 0 & a & 0 & c \\ 0 & 0 & 1 & 0 & 0 & a & b \\ 0 & 0 & 0 & 1 & 0 & 0 & 0 \\ 0 & 0 & 0 & 0 & 1 & 0 & 0 \\ 0 & 0 & 0 & 0 & 0 & 1 & 0 \\ 0 & 0 & 0 & 0 & 0 & 0 & 1 \end{pmatrix} : a,b,c \in \mathbb{F}_q \right\}$, \\[0.2in]
$L = \left\{ \begin{pmatrix} 1 & 0 & 0 & 0 & 0 & 0 & 0 \\ 0 & \alpha & \beta & 0 & 0 & 0 & 0 \\ 0 & \gamma & \delta & 0 & 0 & 0 & 0 \\ 0 & 0 & 0 & 1 & 0 & 0 & 0 \\ 0 & 0 & 0 & 0 & \alpha & \beta & 0 \\ 0 & 0 & 0 & 0 & \gamma & \delta & 0 \\ 0 & 0 & 0 & 0 & 0 & 0 & 1 \end{pmatrix} : \alpha,\beta,\gamma,\delta \in \mathbb{F}_q \ , \ \alpha\delta+\beta\gamma=1 \right\}$. \\[0.2in]
\end{center}

In \cite[Lemmas 6.1 \& 6.2]{scottellis}, it was shown that $t \in G$, the subgroup $K$ is abelian, $K \cap L = 1$, and $C_G(t)=KL$. It is also immediate that $L \cong \mathrm{SL}_2(q)$, since any element $\ell \in L$ can be uniquely identified by the $2 \times 2$ block $$\widehat{\ell} = \begin{pmatrix} \alpha & \beta \\ \gamma & \delta \end{pmatrix},$$ and the condition $\alpha\delta+\beta\gamma=1$ is equivalent to $\det(\widehat{\ell})=1$ in characteristic two. We also write $k(a,b,c)$ to denote the element of $K$ corresponding to the parameters $a,b,c$. Since $(a+a')^2=a^2+(a')^2$ in characteristic two, we have
\begin{equation*}
k(a,b,c) k(a',b',c') = k(a+a',b+b',c+c').
\end{equation*}
Thus, every non-identity element of $K$ has order two, and we can also identify $K$ as an $\mathbb{F}_q$-vector space. We now prove a lemma describing its $\mathbb{F}_qL$-module structure.

\begin{lem} \label{mod}
The conjugation action of $L$ on $K$ is given by $$k(a,b,c)^\ell := \ell^{-1} k(a,b,c)\ell = k(a,b\alpha+c\gamma,b\beta+c\delta).$$
Consequently, $C_G(t)=K \rtimes L$. Moreover, $$K_a=\{k(a,0,0) : a \in \mathbb{F}_q\} \ \text{ and } \ K_{bc}=\{k(0,b,c):b,c\in\mathbb F_q\}$$ are $L$-invariant subgroups of $K$. Furthermore, $K=K_a \oplus K_{bc}$ as an $\mathbb{F}_qL$-module and $K_{bc}$ is an irreducible $\mathbb{F}_qL$-module. Finally, if $\pi : K_{bc}L \to L$ is the natural projection map and $H \le K_{bc}L$ satisfies $\pi(H)=L$, then $H \cap K_{bc}=1 \text{ or } K_{bc}$.
\end{lem}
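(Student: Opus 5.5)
The conjugation formula will come from a direct matrix multiplication. Write $\ell$ in block form with respect to the ordered basis $(y_1 \mid y_2,y_3 \mid y_4 \mid y_5,y_6 \mid y_7)$. Then $\ell$ is block-diagonal $\mathrm{diag}(1,\widehat\ell,1,\widehat\ell,1)$. The element $k=k(a,b,c)$ is $I+N$, where $N$ has nonzero entries only in rows $1,2,3$ and columns $4,\dots,7$, so $\ell^{-1}k\ell=I+\ell^{-1}N\ell$. I will compute $\ell^{-1}N\ell$ block by block.
\begin{itemize}
\item The $(1,4)$ and $(1,7)$ entries $a$ and $a^2$ sit in rows and columns where $\ell$ acts trivially, so they are unchanged.
\item Row $1$, columns $5,6$: the entry $(b,c)$ is multiplied on the right by $\widehat\ell$, giving $(b\alpha+c\gamma,\ b\beta+c\delta)$.
\item Rows $2,3$, columns $5,6$: the block $aI$ becomes $\widehat\ell^{-1}(aI)\widehat\ell=aI$.
\item Rows $2,3$, column $7$: the column $(c,b)^T$ becomes $\widehat\ell^{-1}(c,b)^T$. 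Since $\widehat\ell^{-1}=\begin{pmatrix}\delta&\beta\\ \gamma&\alpha\end{pmatrix}$ in characteristic two, this is $(c\delta+b\beta,\ c\gamma+b\alpha)^T$, which is exactly $(c',b')^T$ with $b'=b\alpha+c\gamma$ and $c'=b\beta+c\delta$.
\end{itemize}
The result is consistent, and $\ell^{-1}k\ell=k(a,b',c')$.

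Since $L$ normalizes $K$ and $K\cap L=1$, the decomposition $C_G(t)=KL$ upgrades to $C_G(t)=K\rtimes L$. The formula shows that the $a$-coordinate is fixed and the $(b,c)$-coordinates transform as the row vector $(b,c)\widehat\ell$. So $K_a$ is a trivial $1$-dimensional module, $K_{bc}$ is the natural $2$-dimensional module of $\mathrm{SL}_2(q)$, and $K=K_a\oplus K_{bc}$ as $\mathbb{F}_qL$-modules (the $\mathbb{F}_q$-structure being $\lambda\cdot k(a,b,c)=k(\lambda a,\lambda b,\lambda c)$). Irreducibility of $K_{bc}$ holds because $\mathrm{SL}_2(q)$ acts transitively on nonzero vectors of $\mathbb{F}_q^2$, so no $1$-dimensional subspace is invariant.

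For the final claim, $M=H\cap K_{bc}$ is normal in $H$, and $H$ acts on $K_{bc}$ by conjugation through $\pi$, because $K_{bc}$ is abelian. Since $\pi(H)=L$, $M$ is an $L$-invariant subgroup of $K_{bc}$. This is the main obstacle: $M$ is a priori only an additive ($\mathbb{F}_2$-) subgroup, not an $\mathbb{F}_q$-subspace, so irreducibility over $\mathbb{F}_q$ does not apply directly. I will instead use transitivity again. If $M\neq 1$, it contains some nonzero vector, and its $L$-orbit is all of $K_{bc}\setminus\{0\}$. Hence $M=K_{bc}$. Otherwise $M=1$.
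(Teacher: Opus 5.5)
Your proposal is correct and follows essentially the paper's route: you verify the conjugation formula explicitly, use transitivity of $L\cong\mathrm{SL}_2(q)$ on the nonzero elements of $K_{bc}$, and observe that conjugation by $H$ on the abelian normal subgroup $K_{bc}$ factors through $\pi$, so $H\cap K_{bc}$ is $L$-invariant. The differences are minor: you cite transitivity of $\mathrm{SL}_2(q)$ on nonzero vectors where the paper exhibits explicit matrices $\ell_1,\ell_2$, and you spell out that $H\cap K_{bc}$ is a priori only an additive subgroup, which the paper covers implicitly by phrasing its irreducibility argument for arbitrary $L$-invariant subgroups.
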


\begin{proof}
The formula for the action of $L$ on $K$ follows from direct calculation. In particular, it implies that $K$ is normalized by $L$. Since $C_G(t)=KL$ and $K \cap L = 1$ by \cite[Lemma 6.2]{scottellis}, we have $C_G(t) = K \rtimes L$. It is also immediate from the action that $K_a$ and $K_{bc}$ are $L$-invariant subgroups of $K$. Since every element $k(a,b,c) \in K$ can be uniquely written as $k(a,b,c)=k(a,0,0)k(0,b,c) \in K_aK_{bc}$, it follows that $K=K_a \oplus K_{bc}$ as an $\mathbb{F}_qL$-module.

We now show that $K_{bc}$ is an irreducible $\mathbb{F}_qL$-module. Let $W$ be a nontrivial $L$-invariant subgroup of $K_{bc}$ and choose a nontrivial element $k(0,b,c) \in W$. Since $(b,c) \ne (0,0)$, there exists some $\alpha,\gamma \in \mathbb{F}_q$ such that $b\alpha+c\gamma=1$: for if $b \ne 0$, we can take $\alpha=b^{-1}$ and $\gamma=0$; whereas if $b=0$, then $c \ne 0$ and we can take $\alpha=0$ and $\gamma=c^{-1}$. Now let $k(0,b',c') \in K_{bc}$ be arbitrary with $(b',c') \ne 0$ and similarly, there exists some $\gamma',\delta' \in \mathbb{F}_q$ such that $b' \delta' + c' \gamma' = 1$. Then we can choose elements $\ell_1,\ell_2 \in L$ where $$\widehat{\ell_1}=\begin{pmatrix} \alpha & c \\ \gamma & b \end{pmatrix} \ \text{ and } \ \widehat{\ell_2}=\begin{pmatrix} b' & c' \\ \gamma' & \delta' \end{pmatrix}.$$ It follows that $k(0,b,c)^{\ell_1\ell_2}=k(0,1,0)^{\ell_2}=k(0,b',c')$, so we conclude that $W=K_{bc}$.

Finally, suppose that $H \le K_{bc}L$ satisfies $\pi(H)=L$. We now claim that $H \cap K_{bc}$ is an $L$-invariant subgroup of $K_{bc}$. Let $x \in H \cap K_{bc}$ and $\ell \in L$ be arbitrary. Then there exists some $k \in K_{bc}$ such that $k\ell \in H$. Since $K_{bc}$ is abelian and $x \in H$, we have $x^\ell=(x^k)^\ell=x^{k\ell} \in H$. Moreover, since $x \in K_{bc} \trianglelefteqslant K_{bc}L$, we also have $x^\ell \in K_{bc}$ and hence $x^\ell \in H \cap K_{bc}$, as required. It follows from the irreducibility of $K_{bc}$ as an $\mathbb{F}_qL$-module that either $H \cap K_{bc}=1 \text{ or } K_{bc}$.
\end{proof}

\section{Proof of Main Results} \label{proof}

Let $\omega$ be a generator of the multiplicative group of $\mathbb{F}_{q^2}$ cyclic of order $q^2-1$, and set $\zeta=\omega^{q-1}$ so that $\zeta$ has order $q+1$ in $\mathbb{F}_{q^2}$, and further set $\lambda=\zeta+\zeta^{-1}=\zeta+\zeta^q$. Note that $\lambda \in \mathbb{F}_q$, since it is the trace of $\zeta$ from $\mathbb{F}_{q^2}$ to $\mathbb{F}_q$. We also claim that $\lambda \ne 1$: if not, then $\zeta+\zeta^{-1}=1$, or equivalently $\zeta^2+\zeta+1=0$. Thus, $\zeta$ must be a primitive cube root of unity, that is, $\zeta$ has order $3$. However, $\zeta$ has order $q+1$, which implies that $q=2$, contradicting our choice of $q>2$. 

We now define the following elements in $\mathrm{GL}_7(q)$. \\[0.1in]
\begin{center}
$t_1:=\begin{pmatrix} 1 & 0 & 0 & 0 & 0 & 0 & 0 \\ 0 & 1 & \lambda & 0 & 0 & 0 & 0 \\ 0 & 0 & 1 & 0 & 0 & 0 & 0 \\ 0 & 0 & 0 & 1 & 0 & 0 & 0 \\ 0 & 0 & 0 & 0 & 1 & \lambda & 0 \\ 0 & 0 & 0 & 0 & 0 & 1 & 0 \\ 0 & 0 & 0 & 0 & 0 & 0 & 1 \end{pmatrix} \quad  t_2:=\begin{pmatrix} 1 & 0 & 0 & 0 & 0 & 0 & 0 \\ 0 & 0 & 1 & 0 & 0 & 0 & 0 \\ 0 & 1 & 0 & 0 & 0 & 0 & 0 \\ 0 & 0 & 0 & 1 & 0 & 0 & 0 \\ 0 & 0 & 0 & 0 & 0 & 1 & 0 \\ 0 & 0 & 0 & 0 & 1 & 0 & 0 \\ 0 & 0 & 0 & 0 & 0 & 0 & 1 \end{pmatrix}$ \\[0.2in]
$t_3:=\begin{pmatrix} 1 & 0 & 0 & 0 & 0 & 1 & 0 \\ 0 & 1 & 1 & 0 & 0 & 0 & 1 \\ 0 & 0 & 1 & 0 & 0 & 0 & 0 \\ 0 & 0 & 0 & 1 & 0 & 0 & 0 \\ 0 & 0 & 0 & 0 & 1 & 1 & 0 \\ 0 & 0 & 0 & 0 & 0 & 1 & 0 \\ 0 & 0 & 0 & 0 & 0 & 0 & 1 \end{pmatrix}$ \\[0.2in] 
$t_4:=\begin{pmatrix} 0 & 0 & 0 & 0 & 0 & 0 & 1 \\ 0 & 0 & 0 & 0 & 1 & 0 & 0 \\ 0 & 0 & 0 & 0 & 0 & 1 & 0 \\ 0 & 0 & 0 & 1 & 0 & 0 & 0 \\ 0 & 1 & 0 & 0 & 0 & 0 & 0 \\ 0 & 0 & 1 & 0 & 0 & 0 & 0 \\ 1 & 0 & 0 & 0 & 0 & 0 & 0 \end{pmatrix} \quad t_5:=\begin{pmatrix} 1 & 0 & 0 & 1 & 0 & 0 & 1 \\ 0 & 1 & 0 & 0 & 1 & 0 & 0 \\ 0 & 0 & 1 & 0 & 0 & 1 & 0 \\ 0 & 0 & 0 & 1 & 0 & 0 & 0 \\ 0 & 0 & 0 & 0 & 1 & 0 & 0 \\ 0 & 0 & 0 & 0 & 0 & 1 & 0 \\ 0 & 0 & 0 & 0 & 0 & 0 & 1 \end{pmatrix}$ \\[0.2in]
\end{center}

We prove Theorem \ref{main} by showing that $\{t_1,t_2,t_3,t_4,t_5\}$ is a C-string of $G$. The following lemmas establish the required properties.

\begin{lem}
$t_i \in G$ for every $1 \le i \le 5$.
\end{lem}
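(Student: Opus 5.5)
The plan is to verify membership in $G$ directly from the definition, using the explicit trilinear formula of Lemma \ref{form}. Each $t_i$ is invertible. Indeed $t_1,t_3,t_5$ are unipotent upper triangular, and $t_2,t_4$ are permutation matrices. So it suffices to check that $F(u,v,w):=f(u\cdot v,w)$ satisfies $F(u^{t_i},v^{t_i},w^{t_i})=F(u,v,w)$ for all $u,v,w\in V$. Since $F$ is trilinear, it is enough to check this on basis triples $(y_i,y_j,y_k)$. Equivalently, one checks that the polynomial in Lemma \ref{form} is invariant under the induced linear substitution of coordinates. Two of the five cases are already done: $t_5=t$, and $t\in G$ by \cite[Lemma 6.1]{scottellis}.

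For the permutation matrices the check is combinatorial. The matrix $t_2$ sends $y_2\leftrightarrow y_3$ and $y_5\leftrightarrow y_6$ and fixes the other basis vectors. The matrix $t_4$ sends $y_1\leftrightarrow y_7$, $y_2\leftrightarrow y_5$ and $y_3\leftrightarrow y_6$, and fixes $y_4$. By the proof of Lemma \ref{form}, $F(u,v,w)=\sum_{\{i,j,k\}\in I}u_iv_jw_k$ summed over ordered triples whose underlying set lies in $I$. It therefore suffices to check that each permutation maps the set system
$$I=\{\{1,4,7\},\{1,5,6\},\{2,3,7\},\{2,4,6\},\{3,4,5\}\}$$
to itself. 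For $t_2$ the images are $\{1,4,7\},\{1,6,5\},\{3,2,7\},\{3,4,5\},\{2,4,6\}$. For $t_4$ they are $\{7,4,1\},\{7,2,3\},\{5,6,1\},\{5,4,3\},\{6,4,2\}$. Both are permutations of $I$.

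For $t_1$, observe that $t_1$ has the block form of an element of $L$ with $\widehat{\ell}=\begin{pmatrix}1&\lambda\\0&1\end{pmatrix}$, which has determinant $1$. So it suffices to show $L\le G$. Here I would cite \cite[Lemma 6.2]{scottellis}, where $C_G(t)=KL$, which already contains $L$. Failing that citation, a direct check works. The substitution acts on the pairs $(y_2,y_3)$ and $(y_5,y_6)$ by the same $2\times 2$ matrix and fixes $y_1,y_4,y_7$. The terms of the formula in Lemma \ref{form} group into the pairings $(u,v)_{23}w_7$, $(u,v)_{56}w_1$, $(u,v)_{37}w_2+(u,v)_{27}w_3$ and $(u,v)_{16}w_5+(u,v)_{15}w_6$. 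The remaining terms pair $\{2,6\}$ and $\{3,5\}$ with $w_4$, and $\{2,4\}$ and $\{3,4\}$ with $w_6,w_5$. Each grouping is a determinant or a symplectic pairing of the two copies of the natural module. Such expressions are invariant exactly when $\alpha\delta+\beta\gamma=1$.

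For $t_3$, I note that $t_3=k(0,1,0)\,\ell$, where $\widehat{\ell}=\begin{pmatrix}1&1\\0&1\end{pmatrix}$. This is checked by multiplying $k(0,1,0)$, whose nonzero off-diagonal entries are at positions $(1,5)$ and $(3,7)$, by $\ell$. The product has the entries $(1,6)$, $(2,3)$, $(2,7)$ and $(5,6)$ shown in $t_3$; I would confirm this entrywise. Hence $t_3\in KL=C_G(t)\le G$. If the factorization turns out slightly off, the fallback is a direct substitution into Lemma \ref{form}. Only the coordinates $u_3,u_6,u_7$ change, via $y_1\mapsto y_1+y_6$, $y_2\mapsto y_2+y_3+y_7$ and $y_5\mapsto y_5+y_6$. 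The check then reduces to the cancellation of the finitely many cross terms in characteristic two.

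The main obstacle is only bookkeeping in this last cross-term verification. Writing each $t_i$ in terms of $K$, $L$ and permutations of $I$ avoids most of it.
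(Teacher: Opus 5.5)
Your proposal takes essentially the same route as the paper. $t_5=t$, and $t_1$ lies in $C_G(t)=K\rtimes L$ by Lemma~\ref{mod}; the paper puts $t_2$ and $t_3$ there too. For $t_4$, you check that the basis permutation $(1,7)(2,5)(3,6)$ preserves the triple system $I$, which is exactly the ``inspection'' the paper appeals to, made explicit. Treating $t_2$ by the same permutation check, rather than noting $\widehat{t_2}=\begin{pmatrix}0&1\\1&0\end{pmatrix}\in L$, is a harmless variation.

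One concrete correction: your factorization of $t_3$ is wrong. The product $k(0,1,0)\ell$ has off-diagonal entries at $(1,5),(1,6),(2,3),(3,7),(5,6)$, so it is not $t_3$; the entrywise check you planned would fail. The correct decomposition is $t_3=k(0,0,1)\,\ell$ with $\widehat{\ell}=\begin{pmatrix}1&1\\0&1\end{pmatrix}$, so $\ell=s^2$. Indeed, $k(0,0,1)$ has its off-diagonal entries at $(1,6)$ and $(2,7)$, and right multiplication by $\ell$ adds exactly the entries $(2,3)$ and $(5,6)$. With this correction, $t_3\in KL=C_G(t)\le G$ and your argument is complete, without needing the direct-substitution fallback.
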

\begin{proof}
It follows from Lemma \ref{mod} that $t_5=t \in G$, $t_1,t_2,t_3 \in C_G(t) \le G$, so it suffices to verify that $t_4 \in G$ using Lemma \ref{form}. Let $\sigma=(1,7)(2,5)(3,6)$. Then $t_4$ permutes the basis vectors according to $\sigma$. 
An inspection of the formula for $f$ shows that $f(u^{t_4} \cdot v^{t_4}, w^{t_4}) = f(u \cdot v , w)$ and hence $t_4 \in G$, as required.
\end{proof}

\begin{lem} \label{ord}
$|t_i|=2$ for every $1 \le i \le 5$ and $|t_it_j|=2$ for every $1 \le i,j \le 5$, where $|i-j| \ge 2$. Furthermore, $|t_1t_2|=q+1$, $|t_2t_3|=3$, $|t_3t_4|=8$, $|t_4t_5|=3$.
\end{lem}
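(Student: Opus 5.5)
The plan is to obtain most of the claims structurally from Lemma~\ref{mod} and to reduce the rest to a few explicit $7\times 7$ computations over $\mathbb{F}_2$ or $\mathbb{F}_q$. The first step is to locate the generators inside $C_G(t)=K\rtimes L$. Reading off the matrices, $t_5=t=k(1,0,0)$. Both $t_1$ and $t_2$ lie in $L$, with
$$\widehat{t_1}=\begin{pmatrix}1&\lambda\\0&1\end{pmatrix},\qquad \widehat{t_2}=\begin{pmatrix}0&1\\1&0\end{pmatrix}.$$
Finally, $t_3=k(0,0,1)\,u$, where $u\in L$ has $\widehat{u}=\begin{pmatrix}1&1\\0&1\end{pmatrix}$. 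Note that $t_3$ lies in $K_{bc}L$.

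The involution and commutation claims then follow quickly.
\begin{itemize}
\item Each of $t_1,t_2,t_4$ squares to the identity: this is immediate for $\widehat{t_1},\widehat{t_2}$ in characteristic two and for the permutation matrix $t_4$.
\item For $t_3$, we have $t_3^2=k(0,0,1)\,k(0,0,1)^{u^{-1}}u^2$. By Lemma~\ref{mod}, $k(0,0,1)^{u^{\pm1}}=k(0,0,1)$, since $u$ has $\gamma=0$ and $\delta=1$. Hence $t_3^2=1$.
\item The element $t_5=t$ commutes with $t_1,t_2,t_3\in C_G(t)$.
\item For $t_1t_3=t_3t_1$: $\widehat{t_1}$ and $\widehat{u}$ are commuting upper unitriangular matrices, and $k(0,0,1)$ is fixed under conjugation by $t_1$ by the formula in Lemma~\ref{mod}.
\item For $t_4$ commuting with $t_1$ and $t_2$: conjugation by $t_4$ applies $\sigma=(1,7)(2,5)(3,6)$ to rows and columns. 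Both $t_1$ and $t_2$ act by the same $2\times2$ block on $\langle y_2,y_3\rangle$ and on $\langle y_5,y_6\rangle$, and trivially on $y_1,y_4,y_7$, so they are $\sigma$-invariant.
\end{itemize}

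For $|t_1t_2|$, work inside $L\cong \mathrm{SL}_2(q)$. The product $\widehat{t_1}\widehat{t_2}=\begin{pmatrix}\lambda&1\\1&0\end{pmatrix}$ has characteristic polynomial $x^2+\lambda x+1$, whose roots are $\zeta^{\pm1}$. These are distinct because $\zeta\neq 1$ ($q+1\ge 5$), so the matrix is diagonalizable over $\mathbb{F}_{q^2}$ with eigenvalues of order exactly $q+1$, giving $|t_1t_2|=q+1$.

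For $|t_2t_3|$, set $g=t_2t_3\in K_{bc}L$. Its projection $\pi(g)$ has block $\begin{pmatrix}0&1\\1&1\end{pmatrix}$, which has order $3$, so $g^3\in K_{bc}$. Since $g^3$ commutes with $g$ and $K_{bc}$ is abelian, $g^3$ is fixed by conjugation by $\pi(g)$. The characteristic polynomial $x^2+x+1$ has no root $1$, so $\pi(g)$ fixes no nonzero vector of $K_{bc}$; hence $g^3=1$ and $|t_2t_3|=3$.

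The last two orders require explicit computation, and $|t_3t_4|=8$ is the step I expect to be the main obstacle. All entries of $t_3,t_4,t_5$ lie in $\mathbb{F}_2$, so these computations are independent of $q$.
\begin{itemize}
\item For $|t_4t_5|=3$, I will multiply out $t_4t_5$ and check that its cube is the identity while $t_4t_5$ itself is not; this is short, since $t_4$ is a permutation matrix and $t_5$ is unitriangular.
\item For $|t_3t_4|=8$, I will write $h=t_3t_4$ and compute its characteristic polynomial, expecting $(x+1)^7$. Then $h$ is unipotent, so its order is $2^m$, where $2^m$ is the least power of two at least the largest Jordan block size. Since that size is at most $7$, the order is at most $8$. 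It therefore suffices to check $(h+I)^4\neq 0$, which forces a Jordan block of size at least $5$ and hence order exactly $8$.
\end{itemize}
Equivalently, one can compute $h^2$ and $h^4$ directly and verify $h^4\ne I=h^8$. If the characteristic polynomial turns out not to be a power of $x+1$, I would instead compute $h^4$ and $h^8$ by brute force.
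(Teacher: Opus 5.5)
Your proof is correct and takes essentially the paper's approach on the one point the paper actually argues: $|t_1t_2|=q+1$ is read off from the eigenvalues $\zeta^{\pm1}$ of the $2\times2$ block, and your remark that $\zeta\neq\zeta^{-1}$ (so the block is diagonalizable) makes that step slightly more careful. Where the paper just says the other orders follow from direct calculation, you replace part of that calculation with valid structural arguments in $C_G(t)=K\rtimes L$ (writing $t_3=k(0,0,1)u$ and $t_5=k(1,0,0)$, plus the fixed-point argument for $|t_2t_3|=3$), so only $|t_3t_4|=8$ and $|t_4t_5|=3$ remain as explicit matrix checks, and both do hold.
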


\begin{proof}
Observe that $t_1t_2$ is block diagonal consisting of three $1 \times 1$ identity matrices and two copies of $$\widehat{t_1t_2} = \begin{pmatrix} \lambda & 1 \\ 1 & 0 \end{pmatrix},$$ whose characteristic polynomial is $X^2+\lambda X+1=(X+\zeta)(X+\zeta^{-1})$ over $\mathbb{F}_{q^2}[X]$. It follows that the eigenvalues of $t_1 t_2$ are $1,\zeta,\zeta^{-1}$ and hence $|t_1t_2|$ is equal to the order of $\zeta$, which is $q+1$. The remaining orders follow from direct calculation.
\end{proof}

So far, we have established that $\{t_1,t_2,t_3,t_4,t_5\}$ is a sggi for a subgroup of $G$. Now, observe that $\{t_2,t_3,t_4,t_5\}$ is defined over $\mathbb{F}_2$, and hence is independent of our choice of $q$ and $\lambda$. In fact, it generates the standard copy of $G_2(2) \cong U_3(3):2$. Let $s=(t_3t_4)^2$ and so $s^4=1$. We have

\begin{equation*}
s = \begin{pmatrix} 1 & 0 & 0 & 0 & 0 & 1 & 0 \\ 0 & 1 & 1 & 0 & 0 & 0 & 1 \\ 0 & 0 & 1 & 0 & 0 & 0 & 0 \\ 0 & 0 & 0 & 1 & 0 & 0 & 0 \\ 1 & 0 & 0 & 0 & 1 & 0 & 0 \\ 0 & 0 & 0 & 0 & 0 & 1 & 0 \\ 0 & 0 & 1 & 0 & 0 & 0 & 1 \end{pmatrix} \ \text{ and } \ s^2=\begin{pmatrix} 1 & 0 & 0 & 0 & 0 & 0 & 0 \\ 0 & 1 & 1 & 0 & 0 & 0 & 0 \\ 0 & 0 & 1 & 0 & 0 & 0 & 0 \\ 0 & 0 & 0 & 1 & 0 & 0 & 0 \\ 0 & 0 & 0 & 0 & 1 & 1 & 0 \\ 0 & 0 & 0 & 0 & 0 & 1 & 0 \\ 0 & 0 & 0 & 0 & 0 & 0 & 1 \end{pmatrix}. \\[0.1in]
\end{equation*}

Next, we record some properties of the subgroup $\langle t_2,t_3,t_4,t_5 \rangle$.

\begin{lem} \label{g2345}
$\langle t_2,t_3,t_4,t_5 \rangle \cong G_2(2)$ and $\{t_2,t_3,t_4,t_5\}$ is a C-string. Furthermore,
\begin{enumerate}
\item $C_{\langle t_2,t_3,t_4 \rangle}(t_5) = \langle t_2,t_3,s^2 \rangle \cong \mathrm{Sym}(4)$,
\item $\langle t_2,t_3 \rangle \cong \mathrm{Sym}(3)$ is maximal in $\langle t_2,t_3,s^2 \rangle$, 
\item $\langle t_2,t_3,t_4 \rangle \cong \mathrm{PSL}_2(7):2$ is maximal in $\langle t_2,t_3,t_4,t_5 \rangle$.
\end{enumerate}
\end{lem}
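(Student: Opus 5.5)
Since $t_2,t_3,t_4,t_5$ are matrices over $\mathbb{F}_2$ lying in $G$, they generate a subgroup of the copy of $G_2(2)$ obtained by restricting the construction of $G$ to $\mathbb{F}_2$. The plan is to identify this subgroup with the whole of $G_2(2)\cong U_3(3):2$, of order $12096$. We then verify the C-string property using Lemma \ref{ip}, and check the three enumerated facts along the way.

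\textbf{Step 1: the rank three subgroups.} First consider $\langle t_2,t_3\rangle$. By Lemma \ref{ord}, $|t_2t_3|=3$, so $\langle t_2,t_3\rangle$ is dihedral of order $6$, i.e.\ $\mathrm{Sym}(3)$. Next consider $\langle t_2,t_3,t_4\rangle$, a rank three sggi of type $[3,8]$. I will show it is $\mathrm{PSL}_2(7):2\cong\mathrm{PGL}_2(7)$, of order $336$. One way is to exhibit relations of the presentation $[3,8]$ with one extra relation, such as $(t_2t_3t_4)^k=1$ or $(t_2(t_3t_4)^2)^m=1$, computed directly from the matrices. Alternatively, one can bound the order by the permutation action on a small invariant set of vectors of $V$ over $\mathbb{F}_2$. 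Rank three sggis with $\langle t_2,t_3\rangle$ and $\langle t_3,t_4\rangle$ dihedral automatically satisfy the intersection condition as soon as $\langle t_2\rangle\cap\langle t_3,t_4\rangle=1$, which holds because $t_2\notin\langle t_3,t_4\rangle\cong D_{16}$. That membership can be checked by inspecting the $16$ matrices, or by noting that $t_2$ does not fix the vectors that $\langle t_3,t_4\rangle$ fixes. By symmetry, the same argument applies to $\{t_3,t_4,t_5\}$, which has type $[8,3]$ and is again $\mathrm{PGL}_2(7)$ with the intersection property.

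\textbf{Step 2: the full group and the C-string.} Next I show $\langle t_2,\dots,t_5\rangle=G_2(2)$. First, $t_5\notin\langle t_2,t_3,t_4\rangle$; I would detect this by an invariant, such as a subspace or vector stabilized by $t_2,t_3,t_4$ but not by $t_5$. Since $\mathrm{PGL}_2(7)$ is a maximal subgroup of $G_2(2)$ (this is in the ATLAS), the group generated is all of $G_2(2)$, which gives (3). Lemma \ref{ip}(2) then yields the C-string property: $\{t_2,t_3,t_4\}$ and $\{t_3,t_4,t_5\}$ are C-strings by Step 1, $t_5\notin\langle t_2,t_3,t_4\rangle$, and we need $\langle t_3,t_4\rangle$ maximal in $\langle t_3,t_4,t_5\rangle$. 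That last fact holds because $D_{16}$ is a Sylow $2$-subgroup of $\mathrm{PGL}_2(7)$ and is maximal there. If one prefers to avoid the maximality argument, the dual form, or Lemma \ref{ip}(1), can instead be checked by direct computation.

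\textbf{Step 3: the centralizer (1) and the maximality in (2).} The matrices show that $t_2,t_3$ and $s^2$ all lie in $C_G(t)=KL$; indeed $t_2,t_3\in K_{bc}L$ and $s^2\in L$. So $\langle t_2,t_3,s^2\rangle\le C_{\langle t_2,t_3,t_4\rangle}(t_5)$. Here $\widehat{t_2}$ is the swap matrix and $\widehat{s^2}$ is $\begin{pmatrix}1&1\\0&1\end{pmatrix}$, which generate $L(\mathbb{F}_2)\cong \mathrm{SL}_2(2)\cong\mathrm{Sym}(3)$. The element $t_3$ equals $k(0,1,0)$ times $\widehat{s^2}$, so it contributes a nontrivial element of $K_{bc}$. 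By Lemma \ref{mod} over $\mathbb{F}_2$, the group then contains all of $K_{bc}(\mathbb{F}_2)$, which has order $4$, giving $2^2:\mathrm{Sym}(3)\cong\mathrm{Sym}(4)$. For the reverse inclusion, the centralizer of an involution in $\mathrm{PGL}_2(7)$ has order at most $16$, or $24$ for... — this is where I expect the main obstacle. I would settle it by computing $C_{G_2(2)}(t_5)$ over $\mathbb{F}_2$, which is $K(\mathbb{F}_2)\rtimes L(\mathbb{F}_2)$ of order $48$ by Lemma \ref{mod}, and intersecting it with $\langle t_2,t_3,t_4\rangle$. The intersection has order dividing $\gcd(48,336)=48$. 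It is not all of $C_{G_2(2)}(t_5)$, because $t_5$ itself, a generator of $K_a$, is not in $\langle t_2,t_3,t_4\rangle$. It contains the $\mathrm{Sym}(4)$ above, so its order is $24$ or... and since it is a proper subgroup of a group of order $48$ containing a subgroup of order $24$, it equals $\mathrm{Sym}(4)$. Finally, $\mathrm{Sym}(3)$ is maximal in $\mathrm{Sym}(4)$, being a point stabilizer, which gives (2).
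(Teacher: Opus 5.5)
Your Step 2, set up by the last sentence of Step 1, does not work. To apply Lemma~\ref{ip}(2) to $(t_2,t_3,t_4,t_5)$ in the forward direction, you need $\langle t_3,t_4\rangle$ to be maximal in $\langle t_3,t_4,t_5\rangle$. You get this from the claim that, ``by symmetry'', $\langle t_3,t_4,t_5\rangle\cong\mathrm{PGL}_2(7)$.

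There is no such symmetry: $t_2$ is a permutation matrix in $L$, while $t_5=k(1,0,0)$ lies in $K_a$. The claim is false. On $V/\langle y_4\rangle$, each of $t_3,t_4,t_5$ commutes with the nilpotent map $\nu$ given by $\bar y_2\mapsto\bar y_1\mapsto\bar y_3\mapsto 0$ and $\bar y_5\mapsto\bar y_7\mapsto\bar y_6\mapsto 0$. The centralizer of $\nu$ in $\mathrm{GL}_6(2)$ is $\mathrm{GL}_2(\mathbb{F}_2[\nu])$, of order $2^9\cdot 3$. The kernel of the action on $V/\langle y_4\rangle$ is a $2$-group. Hence $7\nmid|\langle t_3,t_4,t_5\rangle|$.

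In fact this group has order $96$; it is the $G_{345}\cong\mathrm{SL}_2(3).2.2$ that the paper's appendix records for this family. Its Sylow $2$-subgroups have order $32$, so $\langle t_3,t_4\rangle\cong D_{16}$ is \emph{not} maximal in $\langle t_3,t_4,t_5\rangle$. The hypothesis of Lemma~\ref{ip}(2) that you invoke therefore fails, and Step 2 as written does not prove the C-string property.

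The repair is the ``dual form'' you mention only as an optional alternative; it should be the main argument. Apply Lemma~\ref{ip}(2) to $(t_5,t_4,t_3,t_2)$. This needs two facts:
\begin{enumerate}
\item $\langle t_3,t_4\rangle$ is maximal in $\langle t_2,t_3,t_4\rangle\cong\mathrm{PGL}_2(7)$. This is true, since $D_{16}$ is a maximal subgroup of $\mathrm{PGL}_2(7)$.
\item $t_2\notin\langle t_3,t_4,t_5\rangle$. This follows from the order bound above, or more simply because $t_3,t_4,t_5$ all preserve $\mathrm{span}(y_3,y_4,y_6)$ while $y_3^{t_2}=y_2$.
\end{enumerate}
Your rank-three argument for $\{t_3,t_4,t_5\}$ survives. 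It only uses that $|t_4t_5|=3$ is prime and that $t_5\notin\langle t_3,t_4\rangle$, which the subspace $U$ of Lemma~\ref{t5} shows. Note that your general claim about rank-three sggis is only valid because of that primality.

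With this fix the proposal is sound, and it is more explicit than the paper, which just cites a \textsc{Magma} computation and the ATLAS. Your Step 3 is a genuine hand proof of (1). Since $\langle t_2,t_3,t_4\rangle\le\mathrm{GL}_7(2)$, the relevant centralizer is $KL\cap\mathrm{GL}_7(2)=K(\mathbb{F}_2)L(\mathbb{F}_2)$, of order $48$. Your squeeze is then correct: $24$ divides the order, which divides $48$ and is less than $48$ because $t_5\notin\langle t_2,t_3,t_4\rangle$. (Minor slip: $t_3=k(0,0,1)s^2$, not $k(0,1,0)s^2$; this is harmless.)

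The identification $\langle t_2,t_3,t_4\rangle\cong\mathrm{PGL}_2(7)$ is still only sketched. To do it by hand, run Lemma~\ref{g1234s} over $\mathbb{F}_2$:
\begin{itemize}
\item $\langle t_2,t_3,s\rangle$ acts faithfully on $\mathrm{span}(y_1,y_5,y_6)$.
\item It contains $\langle t_2,t_3,s^2\rangle\cong\mathrm{Sym}(4)$, which is maximal in $\mathrm{SL}_3(2)$.
\item $s\notin C_G(t_5)$, so $s\notin\langle t_2,t_3,s^2\rangle$ and hence $\langle t_2,t_3,s\rangle\cong\mathrm{SL}_3(2)$.
\item $t_4$ then adds the graph automorphism.
\end{itemize}
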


\begin{proof}
The lemma follows from direct calculation in \textsc{Magma} \cite{magma}. The assertions on maximality can also be determined using known maximal subgroup structure of symmetric groups and $G_2(2)$ as recorded in the ATLAS of Finite Groups \cite{atlas}.
\end{proof}

We now focus on establishing that $\{t_1,t_2,t_3,t_4\}$ is a C-string.

\begin{lem} \label{g123s}
$s^2 \notin \langle t_1,t_2,t_3 \rangle$.
\end{lem}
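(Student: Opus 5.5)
Since $t_1$, $t_2$, $t_3$ and $s^2$ all lie in $C_G(t)=K\rtimes L$, the natural route is to read off their $K$- and $L$-components there. From the matrices, $t_1$ and $t_2$ lie in $L$, with
$$\widehat{t_1}=\begin{pmatrix}1&\lambda\\0&1\end{pmatrix} \quad\text{and}\quad \widehat{t_2}=\begin{pmatrix}0&1\\1&0\end{pmatrix}.$$
The matrix $t_3$ has the same $2\times 2$ diagonal blocks as the element $\ell_3\in L$ with $\widehat{\ell_3}=\begin{pmatrix}1&1\\0&1\end{pmatrix}$. Its remaining off-diagonal entries, in positions $(1,6)$ and $(2,7)$, are those of $k(0,0,1)$. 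So we expect $t_3=k(0,0,1)\,\ell_3$ with $k(0,0,1)\in K_{bc}$. This needs one matrix multiplication to confirm, and the order of the factors must be chosen consistently.

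By the same block inspection, $s^2$ is exactly the element $\ell_3\in L$, since its off-diagonal blocks vanish. Hence $s^2=k(0,0,1)\,t_3$ and
$$s^2\in\langle t_1,t_2,t_3\rangle \iff k(0,0,1)\in\langle t_1,t_2,t_3\rangle.$$

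Set $H=\langle t_1,t_2,t_3\rangle\le K_{bc}L$. Let $\pi:K_{bc}L\to L$ be the projection. Then $\pi(H)=\langle \widehat{t_1},\widehat{t_2},\widehat{\ell_3}\rangle$. This contains the upper unitriangular matrices with entries $1$ and $\lambda$ and their conjugates by $\widehat{t_2}$. Using $\lambda\neq 0,1$, the group $\pi(H)$ should be all of $\mathrm{SL}_2(q)$. I would show this from the subgroup structure of $\mathrm{SL}_2(q)$ (Dickson), or more simply by noting that $\widehat{t_1}\widehat{t_2}$ has order $q+1$ while $\widehat{\ell_3}\widehat{t_2}$ has order $3$. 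By Lemma~\ref{mod}, therefore, $H\cap K_{bc}$ is either $1$ or $K_{bc}$.

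It remains to rule out $H\cap K_{bc}=K_{bc}$, and I expect this to be the main step. If $H\cap K_{bc}=K_{bc}$, then $H=K_{bc}L$ has order $q^2|\mathrm{SL}_2(q)|$. So it suffices to show that $H$ is a complement, i.e.\ $|H|=|\mathrm{SL}_2(q)|$. For this, find $g\in K_{bc}$ conjugating $t_1,t_2,t_3$ into $L$. Writing $g=k(0,b,c)$ and using the action formula of Lemma~\ref{mod}, the conditions become linear equations in $(b,c)$ over $\mathbb{F}_q$:
\begin{itemize}
\item $t_1^g\in L$ requires $k(0,b,c)^{t_1}=k(0,b,c)$, i.e.\ $b\lambda=0$, so $b=0$;
\item $t_2^g\in L$ requires that $(b,c)$ be fixed by the swap, i.e.\ $b=c$;
\end{itemize}
Together these force $g=1$, which does not conjugate $t_3$ into $L$. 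So no common conjugating element exists, and the complement approach fails as stated.

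The correct argument is instead a cohomological one. Complements to $K_{bc}$ in $K_{bc}L$ correspond to $H^1(\mathrm{SL}_2(q),\mathbb{F}_q^2)$, which is $\mathbb{F}_q$ for $q>2$ even. Hence non-conjugate complements exist, and $H$ should be one of them. To prove $H\cap K_{bc}=1$, I would exhibit an explicit $1$-cocycle $\phi:L\to K_{bc}$ satisfying $\phi(\ell_{1})=1$, $\phi(\ell_2)=1$ and $\phi(\ell_3)=k(0,0,1)$, where $\ell_1=t_1$ and $\ell_2=t_2$. Then $H\subseteq\{\phi(\ell)\ell\}$, which has order $|L|$. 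A natural candidate uses the Frobenius twist: $K_{bc}$ is the natural module, and the map $\ell\mapsto$ (a suitable quadratic expression in the entries of $\widehat{\ell}$) gives the nonsplit class. The cocycle identity would be verified directly. Failing that, one can argue instead that $|H|\le|\mathrm{SL}_2(q)|$ by checking that $H$ preserves some structure, such as a $2$-dimensional subspace of $V$, that $K_{bc}$ does not preserve. Either way this forces $H\cap K_{bc}=1$, so $k(0,0,1)\notin H$ and $s^2\notin\langle t_1,t_2,t_3\rangle$.
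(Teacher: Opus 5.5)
\textbf{There is a genuine gap: the decisive step is announced but never carried out.}

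Your reduction is correct. $s^2$ is the element $\ell_3\in L$ and $t_3=k(0,0,1)s^2$, so $s^2\in H:=\langle t_1,t_2,t_3\rangle$ if and only if $k(0,0,1)\in H$. Once $\pi(H)=L$ is known, Lemma~\ref{mod} leaves only $H\cap K_{bc}\in\{1,K_{bc}\}$. Your order argument for $\pi(H)=L$ is incomplete, though. That $\widehat{\ell_3}\widehat{t_2}$ has order $3$ does not rule out $\pi(H)=\langle t_1,t_2\rangle$, dihedral of order $2(q+1)$, when $3\mid q+1$ (e.g.\ $q=8$). You need an explicit check such as $(t_1t_2)^{s^2}\neq(t_1t_2)^{-1}$. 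Your computation that no element of $K_{bc}$ conjugates $H$ into $L$ is also right.

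The gap is that you never exclude $K_{bc}\le H$. No cocycle is written down or verified, and ``$H$ should be one of them'' is exactly what has to be proved. Your fallback also cannot work as suggested, because every $H$-invariant subspace $U\le V$ is automatically $K_{bc}$-invariant. The element $k(0,b,c)$ moves $u$ by $u_1(by_5+cy_6)+(cu_2+bu_3)y_7$. One checks on the generators that $\langle y_5,y_6\rangle\le U$ as soon as some $u\in U$ has $u_1\neq0$, and that $y_7\in U$ as soon as some $u\in U$ has $(u_2,u_3)\neq(0,0)$. Since $s^2=k(0,0,1)t_3$, no subspace of $V$ (two-dimensional or otherwise) can separate $s^2$ from $H$.

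The missing idea is a finer invariant, namely a quadratic form. The paper uses
$Q(v)=v_2^2+v_3^2+\lambda v_2v_3+(1+\lambda)v_7^2$.
It checks $Q(v^{t_i})=Q(v)$ for $i=1,2,3$, and computes $Q(v^{s^2})=Q(v)+(1+\lambda)v_2^2$, which differs from $Q(v)$ since $\lambda\neq1$.

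This is precisely the nonsplit cocycle you were looking for. The element $k(0,b,c)\ell$ preserves $Q$ if and only if $(1+\lambda)c^2=1+\alpha^2+\beta^2+\lambda\alpha\beta$ and $(1+\lambda)b^2=1+\gamma^2+\delta^2+\lambda\gamma\delta$. Squaring is bijective on $\mathbb{F}_q$, so the stabilizer of $Q$ in $K_{bc}L$ is a complement to $K_{bc}$. Its cocycle is given by square roots of quadratic expressions in the entries of $\widehat{\ell}$, as you anticipated. This complement contains $t_1,t_2,t_3$ but not $s^2=\ell_3$, since for $\ell_3$ the formula forces $c=1$, so the $Q$-preserving lift of $\ell_3$ is $t_3$ and not $\ell_3$ itself.

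Checking invariance of $Q$ on three generators replaces any verification of a cocycle identity. It needs neither $\pi(H)=L$ nor Lemma~\ref{mod}.
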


\begin{proof}
Let $v=\sum_{i=1}^7 v_i y_i \in V$ and consider the quadratic form given by
$$Q(v)=v_2^2+v_3^2+\lambda v_2v_3 + (1+\lambda)v_7^2.$$ Observe that 
$Q(v^{t_1})=Q(v^{t_2})=Q(v^{t_3})=Q(v)$ 
and hence $\langle t_1,t_2,t_3 \rangle$ preserves $Q$. On the other hand, since $\lambda \ne 1$,
\begin{align*}
Q(v^{s^2}) &= v_2^2+(v_2+v_3)^2+\lambda v_2(v_2+v_3)+(1+\lambda)v_7^2 \\ &= Q(v)+(1+\lambda) v_2^2.
\end{align*}
Thus, $Q(v^{s^2}) \ne Q(v)$ whenever $v_2 \ne 0$. It follows that $s^2$ does not preserve $Q$ and hence, $s^2 \notin \langle t_1,t_2,t_3 \rangle$.
\end{proof}

\begin{lem} \label{g123p}
$\langle t_1,t_2,s^2 \rangle = L$ and $\langle t_1,t_2,t_3,s^2 \rangle = K_{bc}L \cong q^2:\mathrm{SL}_2(q)$.
\end{lem}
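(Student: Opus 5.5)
First note that $t_1,t_2,s^2$ all lie in $L$. They are block diagonal with the same $2\times 2$ block in positions $\{2,3\}$ and $\{5,6\}$, and with $1$'s elsewhere. Their blocks are
$$\widehat{t_1}=\begin{pmatrix}1&\lambda\\0&1\end{pmatrix},\quad \widehat{t_2}=\begin{pmatrix}0&1\\1&0\end{pmatrix},\quad \widehat{s^2}=\begin{pmatrix}1&1\\0&1\end{pmatrix}.$$
So $\langle t_1,t_2,s^2\rangle\le L$, and it suffices to show that these three matrices generate $\mathrm{SL}_2(q)$. Conjugating $\widehat{t_1}$ and $\widehat{s^2}$ by $\widehat{t_2}$ gives the lower unitriangular matrices with entries $\lambda$ and $1$.

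I would then use Dickson's classification of subgroups of $\mathrm{SL}_2(q)$. Since $\lambda\ne 1$ and $\lambda \neq 0$ (as $\zeta\ne 1$ has order $q+1>1$), the group $H=\langle t_1,t_2,s^2\rangle$ contains the upper unipotents with entries $1$ and $\lambda$. We may close these under the group $\langle d\rangle$ generated by a torus element, if we can produce one. A concrete route is to conjugate $\widehat{t_1}$ by $\widehat{s^2}\widehat{t_2}$-type products to get unipotents with entry $\lambda\mu^2$, and so on. Cleaner is to note that $\widehat{t_1t_2}$ has order $q+1$, while $\widehat{s^2t_2}$ has order $3$. Then $H$ contains elements of orders $q+1$ and $3$ together with unipotents $u_1,u_\lambda$ generating an elementary abelian $2$-group of order $4$ inside a Sylow $2$-subgroup. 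By Dickson, the proper subgroups of $\mathrm{SL}_2(q)$ with $q=2^e$ are contained in a Borel subgroup, in $D_{2(q\pm1)}$, or in $\mathrm{SL}_2(q_0)$ for a subfield $\mathbb F_{q_0}$. Irreducibility rules out a Borel subgroup, since $H$ contains both upper and lower unipotents. Dihedral groups have Sylow $2$-subgroups of order $2$, which rules out $D_{2(q\pm 1)}$. For a subfield group $\mathrm{SL}_2(q_0)$, possibly conjugated, an element of order $q+1$ forces $q+1 \mid q_0(q_0^2-1)$, which fails for $q_0<q$ (a Zsigmondy-type check, or directly via $\gcd$). Hence $H=L$.

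For the second equality, $t_3=k(0,1,0)\,t_1'$ where $t_1'\in L$ has block $\widehat{s^2}$. Indeed, reading off $t_3$, its $K$-part has $a=0$, $b=1$, $c=0$, and the remaining part is the unipotent with $1$ in position $(2,3)$ and $(5,6)$. I would verify this factorization in the form $t_3=k(0,1,0)s^2$ or $s^2k(0,1,0)$. Then $t_3s^2\in K_{bc}$ is nontrivial, so $\langle t_1,t_2,t_3,s^2\rangle=\langle L,k(0,1,0)\rangle\le K_{bc}L$. The subgroup $M=\langle t_1,t_2,t_3,s^2\rangle$ satisfies $\pi(M)=L$ and $M\cap K_{bc}\ne 1$, so by the final part of Lemma \ref{mod} we get $M\cap K_{bc}=K_{bc}$. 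Together with $L\le M$ this gives $M=K_{bc}L\cong q^2:\mathrm{SL}_2(q)$, the semidirect structure coming from Lemma \ref{mod}.

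The main obstacle is the generation step $\langle\widehat{t_1},\widehat{t_2},\widehat{s^2}\rangle=\mathrm{SL}_2(q)$, specifically excluding subfield subgroups. I would handle it by the order-$(q+1)$ element: $q+1$ does not divide $|\mathrm{SL}_2(q_0)|=q_0(q_0^2-1)$ for any proper subfield $\mathbb F_{q_0}$, because $q+1$ is odd and exceeds $q_0^2-1$ when $q_0^2\le q$. The only remaining case is $q_0^2>q$, which is impossible for a proper subfield since then $q\ge q_0^2$.
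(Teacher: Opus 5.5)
Your proof is correct. For the second equality you follow the paper's argument: $L\le M\le K_{bc}L$, a nontrivial element $t_3s^2\in M\cap K_{bc}$, then irreducibility via Lemma~\ref{mod}.

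There is one harmless misreading in your factorization of $t_3$. The off-diagonal $1$ in the first row of $t_3$ sits in column $6$, which is the $c$-slot of $k(a,b,c)$. So $t_3=k(0,0,1)\,s^2=s^2\,k(0,0,1)$, and neither $k(0,1,0)s^2$ nor $s^2k(0,1,0)$ equals $t_3$. (The paper's value $t_3s^2=k(0,1,1)$ is also off; the correct value is $k(0,0,1)$.) Only the fact $1\ne t_3s^2\in K_{bc}$ is used, so nothing breaks.

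For $\langle t_1,t_2,s^2\rangle=L$ your route differs from the paper's.
\begin{itemize}
\item The paper uses a single entry of the maximal-subgroup table: $\langle t_1,t_2\rangle\cong D_{2(q+1)}$ is maximal in $\mathrm{SL}_2(q)$. It then shows $s^2\notin\langle t_1,t_2\rangle$ by one $2\times 2$ computation. Every involution of a dihedral group of order $2(q+1)$, with $q+1$ odd, inverts $t_1t_2$, but $(t_1t_2)^{s^2}\ne(t_1t_2)^{-1}$ because $\lambda\ne 1$.
\item You invoke the full list of maximal subgroups and exclude each class by an invariant: no common fixed line (Borel), the Klein four-subgroup $\{1,u_1,u_\lambda,u_{1+\lambda}\}$ (dihedral), and an element of order $q+1$ (subfield).
\end{itemize}
Your approach needs the whole classification rather than one entry of it. In exchange, it replaces the conjugation check with a structural reason, and it makes transparent that $\lambda\notin\{0,1\}$ is exactly what produces a Klein four-subgroup in $H$.

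Your element of order $q+1$ already excludes every class except $D_{2(q+1)}$. Indeed $\gcd(q+1,q(q-1))=\gcd(q+1,2(q-1))=1$, and $q+1\nmid q_0(q_0^2-1)$ for proper subfields. So the two proofs really differ only in how they dispose of that one dihedral overgroup: the paper by the inversion test, you by the Klein four-group. The element of order $3$ plays no role in your argument.
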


\begin{proof}
Note that $\langle t_1,t_2,s^2 \rangle \le L$ and $\langle t_1,t_2\rangle \cong D_{q+1}$, which is a maximal subgroup of $L \cong \mathrm{SL}_2(q)$ by \cite[Table 8.1]{BrayHoltRoneyDougal-maxbible}. Observing the $2 \times 2$ block, we see that $$\widehat{(t_1t_2)^{s^2}}=\begin{pmatrix} 1+\lambda & \lambda \\ 1 & 1 \end{pmatrix} \ \text{ and } \ \widehat{(t_1t_2)^{-1}} = \begin{pmatrix} 0 & 1 \\ 1 & \lambda \end{pmatrix}.$$ Since $\lambda \ne 1$, it follows that $(t_1t_2)^{s^2} \ne (t_1t_2)^{-1}$ and hence $s^2 \notin \langle t_1,t_2 \rangle$. We conclude that $\langle t_1,t_2,s^2 \rangle = L$ by maximality. 

Let $H=\langle t_1,t_2,t_3,s^2 \rangle$. Now, $L \le H \le K_{bc}L$ and $H=(H \cap K_{bc})L$. However, we have $1 \ne t_3s^2=k(0,1,1) \in H \cap K_{bc}$. This rules out the possibility that $H \cap K_{bc}=1$ and so $H \cap K_{bc}=K_{bc}$ by Lemma \ref{mod}. Therefore, $H = K_{bc}L \cong q^2:\mathrm{SL}_2(q)$.
\end{proof}

\begin{lem} \label{g123}
$\langle t_1,t_2,t_3 \rangle \cong \mathrm{SL}_2(q)$ and $\{t_1,t_2,t_3\}$ is a C-string. 
\end{lem}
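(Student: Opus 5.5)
We have to show two things: that $\langle t_1,t_2,t_3\rangle \cong \mathrm{SL}_2(q)$, and that $\{t_1,t_2,t_3\}$ satisfies the intersection property.

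\textbf{Identifying the group.} By Lemma \ref{g123p}, $H:=\langle t_1,t_2,t_3\rangle$ lies in $K_{bc}L$, and its image under $\pi$ is $\langle \pi(t_1),\pi(t_2),\pi(t_3)\rangle$. Here $\pi(t_1)=t_1$ and $\pi(t_2)=t_2$. Also $t_3=k(0,1,1)s^2$ with $s^2\in L$, so $\pi(t_3)=s^2$. Lemma \ref{g123p} gives $\langle t_1,t_2,s^2\rangle=L$, hence $\pi(H)=L$.

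The last part of Lemma \ref{mod} then says $H\cap K_{bc}=1$ or $K_{bc}$. If it were $K_{bc}$, then $H=K_{bc}L$, which contains $s^2$. This contradicts Lemma \ref{g123s}. So $H\cap K_{bc}=1$, and $\pi$ restricts to an isomorphism $H\to L\cong \mathrm{SL}_2(q)$.

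\textbf{Intersection property.} For rank three, a standard reduction (Lemma \ref{ip}(1), since rank-two strings of distinct involutions are automatically C-strings) says it suffices to check
$$\langle t_1,t_2\rangle\cap\langle t_2,t_3\rangle=\langle t_2\rangle.$$
We know $\langle t_1,t_2\rangle\cong D_{q+1}$ and $\langle t_2,t_3\rangle\cong \mathrm{Sym}(3)$ by Lemma \ref{ord}. Their intersection is a subgroup of $\mathrm{Sym}(3)$ containing $t_2$, so it is $\langle t_2\rangle$, or of order $3$ or $6$ unless it is all of $\mathrm{Sym}(3)$.

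The intersection has order $2$ or $6$, because any subgroup of $\mathrm{Sym}(3)$ containing a transposition has order $2$ or $6$. So the only case to exclude is $\langle t_2,t_3\rangle\le\langle t_1,t_2\rangle$, that is, $t_3\in\langle t_1,t_2\rangle$.

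\textbf{Excluding $t_3\in\langle t_1,t_2\rangle$.} Every element of $\langle t_1,t_2\rangle$ lies in $L$, so it acts trivially on $y_1$ and fixes $y_1$. But $t_3$ sends $y_1$ to $y_1+y_6$, which is a visible entry of the matrix. Hence $t_3\notin L$, and the intersection is $\langle t_2\rangle$.

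Alternatively, one can argue via $3\nmid$ divisibility issues: when $3\mid q+1$ the dihedral group does contain elements of order $3$. So the direct matrix observation is the cleaner route, and it is the only place any care is needed.

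The main obstacle is the identification step, specifically ruling out $H\cap K_{bc}=K_{bc}$. Lemma \ref{g123s} was set up precisely to do this, so the proof is short once the projection argument is in place.
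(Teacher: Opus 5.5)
Your proof is correct. The identification $\langle t_1,t_2,t_3\rangle\cong\mathrm{SL}_2(q)$ uses the same projection argument as the paper. There is one harmless slip, inherited from the paper: multiplying out gives $t_3s^2=k(0,0,1)$, not $k(0,1,1)$. All you actually use is $t_3\in K_{bc}s^2$, hence $\pi(t_3)=s^2$, and that holds.

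For the intersection property you take a different route from the paper.

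\begin{itemize}
\item The paper observes that $H$ is a non-abelian simple smooth quotient of the $[q+1,3]$ Coxeter group and cites Conder--Oliveros.
\item You check the rank-three criterion of Lemma \ref{ip}(1) directly. Since $|t_2t_3|=3$ is prime, the only subgroups of $\langle t_2,t_3\rangle\cong\mathrm{Sym}(3)$ containing $t_2$ are $\langle t_2\rangle$ and the whole group. So the only possible failure is $t_3\in\langle t_1,t_2\rangle$. You exclude this because $\langle t_1,t_2\rangle\le L$ fixes $y_1$, whereas $y_1^{t_3}=y_1+y_6$.
\end{itemize}

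Simplicity of $H$ would also exclude that bad case, since $t_3\in\langle t_1,t_2\rangle$ would make $H=\langle t_1,t_2\rangle$ dihedral. So both arguments kill the same obstruction, with different witnesses. Your version is self-contained: it does not depend on first identifying $H$ or on an external theorem, in the same spirit as the paper's proof of Lemma \ref{g1234}. The paper's version is shorter, given the citation.

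You are right that a divisibility argument cannot replace the matrix observation, because $D_{q+1}$ contains a copy of $\mathrm{Sym}(3)$ whenever $3\mid q+1$. Still, delete the garbled sentence about intersections ``of order $3$ or $6$'' and the ``Alternatively'' paragraph. Neither contributes to the proof.
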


\begin{proof}
Let $H=\langle t_1,t_2,t_3 \rangle$. Consider the natural projection map $\pi : K_{bc}L \to L$. Observe that $t_1,t_2 \in L$ and $\pi(t_3)=s^2$, so $$\pi(H) = \langle \pi(t_1),\pi(t_2),\pi(t_3) \rangle = \langle t_1,t_2,s^2 \rangle = L.$$ Thus, $\pi\vert_H$, the restriction of $\pi$ to $H$ is surjective. We now show that $\pi\vert_H$ is injective by calculating its kernel. Note that $\ker \pi\vert_H = H \cap K_{bc}$. Contrary to Lemma \ref{g123p}, here $1 \ne t_3s^2=k(0,1,1) \notin H$, otherwise $s^2 \in H$, contradicting Lemma \ref{g123s}. This rules out the possibility that $H \cap K_{bc}=K_{bc}$ and so $H \cap K_{bc}=1$ by Lemma \ref{mod}. It follows that $\ker\pi\vert_H=1$ and $\pi\vert_H$ is an isomorphism. Therefore, $H \cong L \cong \mathrm{SL}_2(q)$. 

Note that $H$ is a smooth quotient of the $[q+1,3]$ Coxeter
group by Lemma \ref{ord}. Since $q > 2$ is even, $H \cong \mathrm{SL}(2,q) \cong \mathrm{PSL}(2,q)$ is a non-abelian simple group and $q+1 > 3$. Thus,
\cite[Corollary 4.2]{ConderOliveros} implies that $\{t_1,t_2,t_3\}$ is a
C-string.
\end{proof}

\begin{lem} \label{g1234}
$\{t_1,t_2,t_3,t_4\}$ is a C-string.
\end{lem}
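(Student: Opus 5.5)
We apply Lemma \ref{ip} to the sggi $(\langle t_1,t_2,t_3,t_4\rangle,\{t_1,t_2,t_3,t_4\})$. By Lemma \ref{g123}, $\{t_1,t_2,t_3\}$ is a C-string. By Lemma \ref{g2345}, $\{t_2,t_3,t_4,t_5\}$ is a C-string, so its subset $\{t_2,t_3,t_4\}$ is also a C-string. It therefore suffices to verify one of the two conditions of Lemma \ref{ip}. Condition (2) is awkward to use directly, because we would need to know that $\langle t_2,t_3\rangle\cong\mathrm{Sym}(3)$ is maximal in $\langle t_2,t_3,t_4\rangle\cong\mathrm{PSL}_2(7):2$. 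It is not, since $\mathrm{Sym}(3)$ lies in $\mathrm{Sym}(4)$ and in $D_{12}$. We will instead verify condition (1):
\[
\langle t_1,t_2,t_3\rangle\cap\langle t_2,t_3,t_4\rangle=\langle t_2,t_3\rangle .
\]

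Set $X=\langle t_1,t_2,t_3\rangle\cap\langle t_2,t_3,t_4\rangle$. The group $\langle t_2,t_3,t_4\rangle\cong \mathrm{PSL}_2(7):2$ has order $336$, and we will study the subgroups of it that contain $\langle t_2,t_3\rangle\cong\mathrm{Sym}(3)$. Recall that $t_5=t$ and that $\langle t_1,t_2,t_3\rangle\le K_{bc}L\le C_G(t)$, so $X\le C_{\langle t_2,t_3,t_4\rangle}(t_5)=\langle t_2,t_3,s^2\rangle\cong\mathrm{Sym}(4)$ by Lemma \ref{g2345}(1). By Lemma \ref{g2345}(2), $\langle t_2,t_3\rangle$ is maximal in $\langle t_2,t_3,s^2\rangle$. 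Since $\langle t_2,t_3\rangle\le X\le\langle t_2,t_3,s^2\rangle$, either $X=\langle t_2,t_3\rangle$ or $X=\langle t_2,t_3,s^2\rangle$. The second case would give $s^2\in\langle t_1,t_2,t_3\rangle$, which contradicts Lemma \ref{g123s}. Hence $X=\langle t_2,t_3\rangle$, and Lemma \ref{ip}(1) shows that $\{t_1,t_2,t_3,t_4\}$ is a C-string.

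The only point that needs care is the containment $\langle t_1,t_2,t_3\rangle\le C_G(t_5)$. This is already implicit in the earlier lemmas, since $t_1,t_2,t_3\in C_G(t)=K\rtimes L$ and $t_5=t$. We also need $\langle t_2,t_3,t_4\rangle\cap C_G(t_5)=C_{\langle t_2,t_3,t_4\rangle}(t_5)$, which holds by definition. So the main work lies in Lemmas \ref{g2345} and \ref{g123s}, and the present lemma follows formally from them.
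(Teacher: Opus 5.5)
Your proof is correct and is essentially the paper's argument. You place $\langle t_1,t_2,t_3\rangle\cap\langle t_2,t_3,t_4\rangle$ inside $C_{\langle t_2,t_3,t_4\rangle}(t_5)=\langle t_2,t_3,s^2\rangle$, then use the maximality of $\langle t_2,t_3\rangle$ there together with $s^2\notin\langle t_1,t_2,t_3\rangle$ to conclude via Lemma~\ref{ip}(1). Your side remark on why condition (2) is unavailable is accurate, but it is not needed.
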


\begin{proof}
Let $H=\langle t_1,t_2,t_3 \rangle \cap \langle t_2,t_3,t_4 \rangle$. It is immediate that $\langle t_2,t_3 \rangle \le H$. Since $\langle t_1,t_2,t_3 \rangle \le C_G(t_5)$, we have $H \le C_G(t_5) \cap \langle t_2,t_3,t_4 \rangle = C_{\langle t_2,t_3,t_4 \rangle} (t_5) = \langle t_2,t_3,s^2 \rangle$ by Lemma \ref{g2345}(1). Moreover, $\langle t_2,t_3 \rangle$ is maximal in $\langle t_2,t_3,s^2 \rangle$ by Lemma \ref{g2345}(2). However, $s^2 \notin \langle t_1,t_2,t_3 \rangle$ by Lemma \ref{g123s} and hence $s^2 \notin H$. We conclude that $H=\langle t_2,t_3 \rangle$ by maximality. Since $\{t_1,t_2,t_3\}$ and $\{t_2,t_3,t_4\}$ are C-strings by Lemmas \ref{g2345} and \ref{g123}, it follows that $\{t_1,t_2,t_3,t_4 \}$ is a C-string by Lemma \ref{ip}(1).
\end{proof}

We now establish that $\{t_1,t_2,t_3,t_4,t_5\}$ is a C-string. Then, to establish that it generates $G$, we show that $\langle t_1,t_2,t_3,t_4\rangle$ is a maximal subgroup of $G$.

\begin{lem} \label{t5}
$t_5 \notin \langle t_1,t_2,t_3,t_4 \rangle$.
\end{lem}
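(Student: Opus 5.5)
The plan is to separate $t_5$ from $\langle t_1,t_2,t_3,t_4\rangle$ by a linear invariant, in the same spirit as the proof of Lemma \ref{g123s}, where a quadratic form preserved by $\langle t_1,t_2,t_3\rangle$ but not by $s^2$ did the job. Here a simpler invariant should suffice: the coordinate functional $\phi(v)=v_4$ on $V$, where $v=\sum_{i=1}^7 v_iy_i$.

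Since vectors are rows and $G$ acts by right multiplication, $\phi(v^g)$ is $v$ times the fourth column of $g$. Hence
$$M=\{g\in \mathrm{GL}_7(q) : \phi(v^g)=\phi(v) \text{ for all } v\in V\}$$
is exactly the set of matrices whose fourth column is the standard basis column $e_4^{\mathsf T}$. It is clearly a subgroup of $\mathrm{GL}_7(q)$, being the stabilizer of a vector under the dual action.

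The steps are as follows.

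\emph{Step 1.} Inspect the fourth columns of the explicit matrices $t_1,t_2,t_3,t_4$. In each of them the only nonzero entry in column $4$ is the diagonal $1$:
\begin{itemize}
\item $t_1$ and $t_2$ act only inside $\langle y_2,y_3\rangle$ and $\langle y_5,y_6\rangle$;
\item $t_3$ has its off-diagonal entries in columns $6$, $3$, $7$;
\item $t_4$ is the permutation matrix of $\sigma=(1,7)(2,5)(3,6)$, which fixes $4$.
\end{itemize}
So $t_1,t_2,t_3,t_4\in M$, and therefore $\langle t_1,t_2,t_3,t_4\rangle\le M$.

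\emph{Step 2.} Observe that the fourth column of $t_5$ has entries $1$ in rows $1$ and $4$. Thus $\phi(v^{t_5})=v_1+v_4$, which differs from $\phi(v)$ whenever $v_1\neq 0$; for instance $\phi(y_1^{t_5})=1\ne 0=\phi(y_1)$. Hence $t_5\notin M$, and so $t_5\notin\langle t_1,t_2,t_3,t_4\rangle$.

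There is no real obstacle beyond carefully reading off the matrix entries with the row-vector convention, so that it is the column of each matrix, not the row, that determines $\phi(v^g)$. Rows would not work: every generator, $t_5$ included, fixes $y_4$ itself.
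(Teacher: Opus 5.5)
Your proof is correct and is essentially the paper's argument in dual form. The paper shows that $\langle t_1,t_2,t_3,t_4\rangle$ preserves the hyperplane $U=\mathrm{span}_{\mathbb{F}_q}(y_1,y_2,y_3,y_5,y_6,y_7)=\ker\phi$ while $y_1^{t_5}=y_1+y_4+y_7\notin U$, which is the same fourth-column check that you carry out through the functional $\phi(v)=v_4$.
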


\begin{proof}
    Consider the subspace of $V$ given by $$U=\mathrm{span}_{\mathbb{F}_q}(y_1,y_2,y_3,y_5,y_6,y_7).$$ 
    Observe that $U^{t_1},U^{t_2},U^{t_3},U^{t_4} \subseteq U$
and hence $\langle t_1,t_2,t_3,t_4 \rangle$ preserves $U$. On the other hand, $y_1^{t_5}=y_1+y_4+y_7\notin U$. It follows that $t_5$ does not preserve $U$ and hence $t_5 \notin \langle t_1,t_2,t_3,t_4 \rangle$. 
\end{proof}

\begin{lem} \label{g12345}
$\{t_1,t_2,t_3,t_4,t_5\}$ is a C-string.
\end{lem}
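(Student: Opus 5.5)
The plan is to apply Lemma \ref{ip}(2) with $r=5$ to the sggi $(\langle t_1,\dots,t_5\rangle,\{t_1,\dots,t_5\})$. Lemma \ref{ord} already gives the sggi structure, so three inputs remain.

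\begin{enumerate}
\item $\{t_1,t_2,t_3,t_4\}$ is a C-string; this is Lemma \ref{g1234}.
\item $\{t_2,t_3,t_4,t_5\}$ is a C-string; this is Lemma \ref{g2345}.
\item The two conditions of part (2) hold. First, $t_5 \notin \langle t_1,t_2,t_3,t_4\rangle$, which is exactly Lemma \ref{t5}. Second, $\langle t_2,t_3,t_4\rangle$ is maximal in $\langle t_2,t_3,t_4,t_5\rangle$, which is Lemma \ref{g2345}(3).
\end{enumerate}

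With these in hand, Lemma \ref{ip}(2) gives the result immediately.

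There is essentially no obstacle here, since all the real work was pushed into the preceding lemmas. The only point to double-check is that Lemma \ref{ip} is applied in the correct orientation. The ``top'' generator $s_r=t_5$ must lie outside the parabolic subgroup $\langle s_1,\dots,s_{r-1}\rangle$. The maximality condition concerns $\langle s_2,\dots,s_{r-1}\rangle$ inside $\langle s_2,\dots,s_r\rangle$, namely $\mathrm{PSL}_2(7):2 < G_2(2)$. Both requirements match the lemmas as stated.

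A sanity check on why (2) suffices: the intersection $\langle t_1,\dots,t_4\rangle \cap \langle t_2,\dots,t_5\rangle$ contains $\langle t_2,t_3,t_4\rangle$. It is a subgroup of $\langle t_2,\dots,t_5\rangle$, and it does not contain $t_5$, so it is proper. Maximality then forces equality, and condition (1) of Lemma \ref{ip} follows. I would mention this reduction in one line for the reader.
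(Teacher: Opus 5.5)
Your proposal is correct and matches the paper's proof exactly: it applies Lemma \ref{ip}(2), using Lemmas \ref{g1234} and \ref{g2345} for the two rank-four C-strings, Lemma \ref{t5} for $t_5 \notin \langle t_1,t_2,t_3,t_4\rangle$, and Lemma \ref{g2345}(3) for the maximality of $\langle t_2,t_3,t_4\rangle$ in $\langle t_2,t_3,t_4,t_5\rangle$. Your one-line reduction of condition (2) to condition (1) is also valid.
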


\begin{proof}
Since $\{t_1,t_2,t_3,t_4\}$ and $\{t_2,t_3,t_4,t_5\}$ are C-strings by Lemmas \ref{g2345} and \ref{g1234}, as well as $t_5 \notin \langle t_1,t_2,t_3,t_4 \rangle$ and $\langle t_2,t_3,t_4 \rangle$ maximal in $\langle t_2,t_3,t_4,t_5 \rangle$ by Lemmas \ref{g2345}(3) and \ref{t5}, it follows that $\{t_1,t_2,t_3,t_4,t_5\}$ is a C-string by Lemma \ref{ip}(2).
\end{proof}

\begin{lem} \label{g1234s}
$\langle t_1,t_2,t_3,s \rangle \cong \mathrm{SL}_3(q)$.
\end{lem}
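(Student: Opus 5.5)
Let $H=\langle t_1,t_2,t_3,s\rangle$. The plan is to squeeze $H$ between a known subgroup and a copy of $\mathrm{SL}_3(q)$ inside $G$. For the upper bound, I will use the standard fact (cf. \cite[Section 4.3]{Wilson-cfsgbible}) that the stabilizer in $G$ of a suitable pair of complementary totally singular $3$-spaces, or equivalently of a non-degenerate (non-singular) point in the octonion picture, is $\mathrm{SL}_3(q):2$. The index-two subgroup $\mathrm{SL}_3(q)$ fixes each of the two $3$-spaces and acts on one as the natural module and on the other as its dual.

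The first step is to identify these $3$-spaces concretely. Lemma \ref{t5} shows that $\langle t_1,\dots,t_4\rangle$ preserves $U=\mathrm{span}(y_1,y_2,y_3,y_5,y_6,y_7)$. The natural candidates are therefore $W_1=\mathrm{span}(y_1,y_2,y_3)$ and $W_2=\mathrm{span}(y_5,y_6,y_7)$, which are swapped by $t_4$ (via $\sigma$) and should be totally singular and multiplicatively closed in the appropriate sense. I would check directly from the matrices that $t_1,t_2,t_3$ and $s$ each stabilize $W_1$ modulo $y_4$, or, more cleanly, that they fix the vector $y_4$ and preserve the flag structure. Looking at the matrices, however, $t_3$ sends $y_1\mapsto y_1+y_6$, so the literal coordinate spaces are not preserved, and the right pair must be found.

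I would proceed as follows. Let $M$ be the stabilizer in $G$ of $y_4$; this is $\langle y_4\rangle$, nonsingular, and $M\cong \mathrm{SU}_3(q):2$ or $\mathrm{SL}_3(q):2$ depending on the type of $Q$ on $\langle y_4,\ldots\rangle$. Since all the generators fix $y_4$ and act on $y_4^\perp/\langle y_4\rangle$ (all generators visibly fix the $y_4$ row and column), I will argue that this is the $\mathrm{SL}_3(q):2$ (plus type) case, because $G_2(2)\cap M$ contains $\langle t_2,t_3,t_4\rangle\cong \mathrm{PSL}_2(7):2=\mathrm{SL}_3(2):2$. The subgroup $N=\langle t_1,t_2,t_3,s\rangle$ lies in $M$. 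To show $N\le M'=\mathrm{SL}_3(q)$, I will verify that $t_1,t_2,t_3,s$ all lie in the index-two subgroup, for instance because they are products of elements in $\mathrm{SL}_3(2)\le \mathrm{SL}_3(q)$ together with $t_1\in L$, which is generated by transvection-type elements in $M'$. Alternatively, $s=(t_3t_4)^2$ is a square and $t_1,t_2,t_3$ generate the perfect group $\mathrm{SL}_2(q)$, so all four generators lie in $M'$ automatically, since $M/M'$ has order $2$.

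For the lower bound, $N$ contains $K_{bc}L=\langle t_1,t_2,t_3,s^2\rangle\cong q^2:\mathrm{SL}_2(q)$ by Lemma \ref{g123p}, which is a maximal parabolic subgroup of $\mathrm{SL}_3(q)$ (\cite[Table 8.3]{BrayHoltRoneyDougal-maxbible}). Hence $N$ equals either $K_{bc}L$ or $\mathrm{SL}_3(q)$, and it suffices to show $s\notin K_{bc}L$. For this I will use $K_{bc}L\le C_G(t_5)$: from the matrix of $s$, it is enough to check $st_5\neq t_5 s$ (for instance, compare the images of $y_5$). Hence $N=\mathrm{SL}_3(q)$. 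The main obstacle is the first step, namely rigorously identifying the overgroup $M'\cong\mathrm{SL}_3(q)$ containing all generators; citing the known structure of the stabilizer of a nonsingular vector in $G_2(q)$, $q$ even, is what I would try first.
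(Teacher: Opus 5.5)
Your proposal has genuine gaps in both halves of the squeeze.

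\textbf{Upper bound.} Here $y_4=x_4+x_5$ is the identity element of the octonion algebra, so every automorphism fixes it. Its stabilizer in $G$ is therefore all of $G$, not $\mathrm{SL}_3(q):2$ or $\mathrm{SU}_3(q):2$. Consistently, $t_5$ also fixes $y_4$, so your ``$G_2(2)\cap M$'' test is vacuous. You are importing the odd-characteristic picture. In characteristic two, the relevant $\mathrm{SL}_3(q)$ is the stabilizer of the idempotent $x_4\notin V$. On $V$ it acts on the Peirce spaces $\mathrm{span}(y_1,y_5,y_6)$ and $\mathrm{span}(y_2,y_3,y_7)$ as natural and dual module. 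These are exactly the invariant $3$-spaces you gave up looking for. The paper checks that $t_1,t_2,t_3,s$ preserve both and fix $y_4$. It then shows that the induced representation $\rho$ on $\mathrm{span}(y_1,y_5,y_6)$ is faithful, because the other block is $\rho(h)^{-T}$. Finally, all generators have determinant one, so $\rho(H)\le\mathrm{SL}_3(q)$. Without this step, your perfectness/square argument has no valid overgroup to apply to.

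\textbf{Lower bound.} The group $K_{bc}L\cong q^2:\mathrm{SL}_2(q)$ is \emph{not} maximal in $\mathrm{SL}_3(q)$. Under $\rho$ it has index $q-1>1$ in the parabolic $P\cong q^2:\mathrm{GL}_2(q)$ stabilizing $U_0=\mathrm{span}(y_5,y_6)$. So the dichotomy ``$N=K_{bc}L$ or $N=\mathrm{SL}_3(q)$'' fails. Showing $st_5\ne t_5s$ only gives $s\notin K_{bc}L$, which cannot rule out $N\le P$.

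The same test would in fact ``prove'' a false statement. Take $1\ne\mu\in\mathbb{F}_q^\times$ and let $d$ act as $\mathrm{diag}(\mu^{-2},\mu,\mu)$ on $(y_1,y_5,y_6)$, as $\mathrm{diag}(\mu^{2},\mu^{-1},\mu^{-1})$ on $(y_7,y_3,y_2)$, and fix $y_4$. Then:
\begin{itemize}
\item $d$ lies in $G$, as one checks with Lemma \ref{form}.
\item $d$ does not commute with $t_5$: compare $y_1^{t_5d}=\mu^{-2}y_1+y_4+\mu^{2}y_7$ with $y_1^{dt_5}=\mu^{-2}(y_1+y_4+y_7)$.
\item Yet $\langle K_{bc}L,d\rangle$ stabilizes $U_0$, so it lies in $P$ and is proper in $\mathrm{SL}_3(q)$.
\end{itemize}

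The missing idea is the paper's. First show that $P$ is the \emph{unique} maximal subgroup of $\mathrm{SL}_3(q)$ containing $\rho(K_{bc}L)$. Any maximal overgroup properly containing it has index less than $q^3-1$. From the list of maximal subgroups this forces a parabolic; at $q=4$ the extra candidate $3.A_6$ is excluded by order. One then checks that $\rho(K_{bc}L)$ fixes no $1$-space of $\mathrm{span}(y_1,y_5,y_6)$ and that $U_0$ is its only invariant $2$-space. Finally, $y_5^{s}=y_1+y_5\notin U_0$, so $s\notin P$, and hence $N=\mathrm{SL}_3(q)$.
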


\begin{proof}
Let $H=\langle t_1,t_2,t_3,s \rangle$. Consider the subspaces of $V$ given by $$U=\mathrm{span}_{\mathbb{F}_q}(y_1,y_5,y_6) \ \ \text{ and } \ \ \widehat{U}=\mathrm{span}_{\mathbb{F}_q}(y_7,y_3,y_2).$$ Observe that $U^{t_1},U^{t_2},U^{t_3},U^{s} \subseteq U$ and similarly $\widehat{U}^{t_1},\widehat{U}^{t_2},\widehat{U}^{t_3},\widehat{U}^{s} \subseteq \widehat{U}$ and hence $H$ preserves $U$ and $\widehat{U}$. We also observe that $y_4$ is fixed by $H$.

Consider the induced representations $\rho : H \to \mathrm{GL}(U)$ and $\widehat{\rho} : H \to \mathrm{GL}(\widehat{U})$. With respect to the ordered basis $\mathcal{B}=(y_1,y_5,y_6,y_4,y_7,y_3,y_2)$, each element $h \in H$ has the form $[h]_{\mathcal{B}}=\rho(h) \oplus I_1 \oplus \widehat{\rho}(h) \in \mathrm{GL}_7(q)$, and we have $\widehat{\rho}(h)=\rho(h)^{-T}$ by direct calculation on the generators of $H$. Let $g \in \mathrm{ker}(\rho)$. Then $[g]_{\mathcal{B}}=I_3 \oplus I_1 \oplus {I_3}^{-T} = I_7$ and hence $g=1_H$, that is, $\ker(\rho)=1_H$. It follows that $\rho$ is a faithful representation and $H \cong \rho(H)$. Since the generators of $\rho(H)$ each have determinant 1, we deduce that $\rho(H) \le \mathrm{SL}_3(q)$. Suppose to the contrary that $\rho(H) < \mathrm{SL}_3(q)$.

Let $H_0=\langle t_1,t_2,t_3,s^2 \rangle \le H$. Then, $\rho(H_0) \le \rho(H) \le M$ for some maximal subgroup $M$ of $\mathrm{SL}_3(q)$, and $\rho(H_0) \cong H_0 \cong q^2 : \mathrm{SL}_2(q)$ by the faithfulness of $\rho$ and Lemma \ref{g123p}. Again, by direct calculation on the generators of $H_0$, we see that $H_0$ stabilizes the two-dimensional space $U_0:=\mathrm{span}_{\mathbb{F}_q}(y_5,y_6)$. It follows that $\rho(H_0) \le P:=\mathrm{Stab}_{\mathrm{SL}_3(q)}(U_0)$,
where $P$ is a maximal parabolic subgroup of $\mathrm{SL}_3(q)$ of shape $q^2:\mathrm{GL}_2(q)$. Since $\rho(H_0)$ has index $q-1>1$ in $P$, we have $\rho(H_0) < P$, and hence $\rho(H_0)$ is not a maximal subgroup of $\mathrm{SL}_3(q)$.


We claim that $M=P$; that is, $P$ is the unique maximal overgroup of $\rho(H_0)$ in $\mathrm{SL}_3(q)$. Observe that $$[\mathrm{SL}_3(q):M] < [\mathrm{SL}_3(q):\rho(H_0)]=\frac{|\mathrm{SL}_3(q)|}{|q^2:\mathrm{SL}_2(q)|}=\frac{q^3(q^3-1)(q^2-1)}{q^2(q(q^2-1))} = q^3-1.$$
When $q>4$, inspecting the maximal subgroups of $\mathrm{SL}_3(q)$ given in \cite[Tables 8.3 \& 8.4]{BrayHoltRoneyDougal-maxbible} and calculating their indices in $\mathrm{SL}_3(q)$ shows that the only possibility for the shape of $M$ is the parabolic subgroup $q^2:\mathrm{GL}_2(q)$. When $q=4$, there is the additional possibility of an exceptional nonparabolic subgroup $3.A_6$ of order 1080, which cannot contain $\rho(H_0)\cong4^2:\mathrm{SL}_2(4)$ of order 960. Thus, for every even $q>2$, $M \cong q^2:\mathrm{GL}_2(q)$ and stabilizes either a one- or two-dimensional subspace of $U$.

To prove our claim, it remains to show that $\rho(H_0)$ does not stabilize a one-dimensional subspace of $U$, and $U_0$ is the unique $\rho(H_0)$-invariant two-dimensional subspace of $U$. Suppose that $W$ is a $\rho(H_0)$-invariant one-dimensional subspace of $U$. Then $W=\mathrm{span}_{\mathbb{F}_q}(w)$ where $w=w_1y_1+w_5y_5+w_6y_6$ for some $(w_1,w_5,w_6) \ne (0,0,0)$. Observe that
\begin{align*}
w^{\rho(t_2)}&=w_1y_1+w_6y_5+w_5y_6, \\
w^{\rho(t_3)}&=w_1y_1+w_5y_5+(w_1+w_5+w_6)y_6, \\
w^{\rho(s^2)}&=w_1y_1+w_5y_5+(w_5+w_6)y_6.
\end{align*}
First, suppose that $w_1 \ne 0$. Since $\langle w \rangle$ is $\rho(H_0)$-invariant, we have $w^{\rho(h)}=\lambda_h w$ for every $h \in \{t_2,t_3,s^2\}$; comparing the coefficient of $y_1$ gives $\lambda_h=1$ in each case. From $t_2$, we obtain $w_5=w_6$; from $t_3$, we have $w_1+w_5+w_6=w_6$, and hence $w_1=w_5$; finally $s^2$ gives $w_6=w_5+w_6$, so $w_5=0$. It follows that $w_1=w_5=w_6=0$, a contradiction. Now, suppose that $w_1=0$. Again, $w^{\rho(s^2)}=\lambda w$ gives $w_5=\lambda w_5$ and $w_5+w_6=\lambda w_6$. If $w_5 \ne 0$, then $\lambda=1$ by the first equation, while the second equation implies $w_5=0$, another contradiction. It follows that $w_5=0$, so $w \in \langle y_6 \rangle$, which is not invariant under $\rho(t_2)$. Thus, $\rho(H_0)$ stabilizes no one-dimensional subspace of $U$. Finally, suppose that $U_1 \ne U_0$ is a $\rho(H_0)$-invariant two-dimensional subspace of $U$. Since $\dim U = 3$, we have $\dim(U_0 \cap U_1)=1$ and $U_0 \cap U_1$ is a $\rho(H_0)$-invariant one-dimensional subspace of $U$, contradicting the preceding result. Therefore, $U_0$ is the unique $\rho(H_0)$-invariant two-dimensional subspace of $U$. It follows that $M=P=\mathrm{Stab}_{\mathrm{SL}_3(q)}(U_0)$, as claimed.

Hence, $\rho(H) \le P$. However, $s \in H$ and $y_5 \in U_0$, but $y_5^{\rho(s)}=y_1+y_5 \notin U_0$. Thus, $\rho(s) \notin P$, a contradiction. Therefore, $\rho(H)=\mathrm{SL}_3(q)$. Since $\rho$ is faithful, we conclude that $H \cong \mathrm{SL}_3(q)$. We also note that $\langle t_1,t_2,t_3,t_4 \rangle = \langle H,t_4 \rangle$ 
and $U^{t_4} = \widehat{U}$; here $t_4$ swaps the two $H$-invariant subspaces $U$ and $\widehat{U}$, and hence $t_4 \not\in H$. Moreover, we have $t_1^{t_4}=t_1$, $t_2^{t_4}=t_2$, $t_3^{t_4}=t_3s$, $s^{t_4}=s^{-1}$, so $t_4$ normalizes $H$. Furthermore, $$t_2t_4=\begin{pmatrix} 0 & 0 & 0 & 0 & 0 & 0 & 1 \\ 0 & 0 & 0 & 0 & 0 & 1 & 0 \\ 0 & 0 & 0 & 0 & 1 & 0 & 0 \\ 0 & 0 & 0 & 1 & 0 & 0 & 0 \\ 0 & 0 & 1 & 0 & 0 & 0 & 0 \\ 0 & 1 & 0 & 0 & 0 & 0 & 0 \\ 1 & 0 & 0 & 0 & 0 & 0 & 0 \end{pmatrix}$$ induces the graph automorphism of $\mathrm{SL}_3(q)$. Now, conjugation by $t_2 \in H$ is an inner automorphism, so $t_4$ induces a non-trivial outer automorphism in the same class as the graph automorphism. We conclude that $\langle H,t_4 \rangle=H \rtimes \langle t_4 \rangle \cong \mathrm{SL}_3(q):2$.
\end{proof}

We are now ready to prove our main theorem.

\begin{proof}[Proof of Theorem \ref{main}]
It follows from Lemma \ref{g1234s} that $\langle t_1,t_2,t_3,t_4 \rangle \cong \mathrm{SL}_3(q):2$, which is a maximal subgroup of $G$ by \cite[Table 8.30]{BrayHoltRoneyDougal-maxbible} (see also \cite[Theorem 2.3]{Cooperstein-g2max}). However, $t_5 \notin \langle t_1,t_2,t_3,t_4 \rangle$ by Lemma \ref{t5}. We conclude that $\langle t_1,t_2,t_3,t_4,t_5 \rangle = G$ by maximality. Thus, $\{t_1,t_2,t_3,t_4,t_5\}$ is a rank five C-string of $G$ with Schl\"{a}fli type $[q+1,3,8,3]$ by Lemmas \ref{ord} and \ref{g12345}. Finally, we have $\langle t_1,t_2,t_3 \rangle \cong \mathrm{SL}_2(q)$ and $\langle t_2,t_3,t_4,t_5 \rangle \cong G_2(2)$ by Lemmas \ref{g2345} and \ref{g123}.
\end{proof}

\begin{rem}
It can be shown that after conjugation by a suitable change-of-basis matrix, taking the dual and setting the parameter $t=\lambda$, the rank five C-string of Gvozdev and Nuzhin \cite{GvozdevNuzhin-g2} corresponds to the C-string $\{t_1,t_2,t_3s^2,t_4,t_5\}$. Thus, the two constructions differ only in the choice of the third generator.
\end{rem}

Finally, we recall a variant of the rank reduction theorem in \cite{BL-rankreduction}, which we will use to obtain string C-group representations of $G$ of ranks three and four.

\begin{lem}{\cite[Corollary 1.2]{BL-rankreduction}} \label{rankred}
Let $(\Gamma,\{ s_1,\dots,s_r \})$ be a non-degenerate string C-group of rank $r \ge 4$. If $s_3s_4$ has odd order, then $(\Gamma,\{ s_2, s_1 s_3, s_4, \dots,  s_r \})$ is a string C-group of rank $r-1$.
\end{lem}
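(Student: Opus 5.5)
The plan is to verify the three defining properties for the new generators
$\rho_0=s_2$, $\rho_1=s_1s_3$ and $\rho_{j}=s_{j+2}$ for $2\le j\le r-2$:
\begin{enumerate}
\item they are involutions satisfying the string commutation relations;
\item they generate $\Gamma$;
\item they satisfy the intersection property.
\end{enumerate}

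For (1), $s_1s_3$ is an involution because $s_1,s_3$ commute and $s_1\ne s_3$ by non-degeneracy. The string relations are inherited: $s_2$ commutes with $s_4,\dots,s_r$, and $s_1s_3$ commutes with $s_5,\dots,s_r$.

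For (2), the key observation is that $s_1$ commutes with $s_3$ and $s_4$, so $s_1$ is central in $\langle s_1,s_3,s_4\rangle$. Let $m=|s_3s_4|$, which is odd. Then
\[(\rho_1\rho_2)^m=(s_1s_3s_4)^m=s_1^m(s_3s_4)^m=s_1 .\]
Hence $s_1$ lies in any subgroup containing $\rho_1,\rho_2$, and so does $s_3=s_1\rho_1$. Therefore $\langle\rho_0,\dots,\rho_{r-2}\rangle\supseteq\langle s_1,\dots,s_r\rangle=\Gamma$.

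The same trick identifies the parabolic subgroups. First,
\[\langle\rho_1,\dots,\rho_{r-2}\rangle=\langle s_1\rangle\times\langle s_3,\dots,s_r\rangle ,\]
where the product is direct because $\langle s_1\rangle\cap\langle s_3,\dots,s_r\rangle=1$ by the intersection property of the original string. Second, if $r\ge5$, then $\langle\rho_0,\dots,\rho_{r-3}\rangle=\langle s_1,\dots,s_{r-1}\rangle$.

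For (3), I would argue by induction on $r$ using Lemma~\ref{ip}(1). Consider the truncated string $\{s_1,\dots,s_{r-1}\}$. It is again a non-degenerate string C-group with $s_3s_4$ of odd order, so by induction $\{\rho_0,\dots,\rho_{r-3}\}$ is a C-string. The string $\{\rho_1,\dots,\rho_{r-2}\}$ is a C-string because it sits inside $\langle s_1\rangle\times\langle s_3,\dots,s_r\rangle$. Here one checks the intersection property coordinatewise, using that $\{s_3,\dots,s_r\}$ is a C-string; the same trick shows the relevant subgroups contain $s_1$ whenever they contain both $\rho_1$ and $\rho_2$.

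For the intersection condition, take $xs_1^e\in\langle s_1,\dots,s_{r-1}\rangle$ with $x\in\langle s_3,\dots,s_r\rangle$. Then
\[x\in\langle s_1,\dots,s_{r-1}\rangle\cap\langle s_3,\dots,s_r\rangle=\langle s_3,\dots,s_{r-1}\rangle ,\]
so the intersection of the two parabolics equals $\langle s_1\rangle\times\langle s_3,\dots,s_{r-1}\rangle=\langle\rho_1,\dots,\rho_{r-3}\rangle$, as required.

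The main obstacle is the base case $r=4$, which is the rank-three string $\{s_2,s_1s_3,s_4\}$. There one needs
\[\langle s_2,s_1s_3\rangle\cap\langle s_1s_3,s_4\rangle=\langle s_1s_3\rangle .\]
By the original intersection property, the left side lies in $\langle s_1,s_2,s_3\rangle\cap\langle s_1,s_3,s_4\rangle=\langle s_1,s_3\rangle$, a Klein four-group. So it suffices to show $s_1\notin D:=\langle s_2,s_1s_3\rangle$, since $s_3\in D$ would likewise force $s_1\in D$. If $s_1\in D$, then $D=\langle s_1,s_2,s_3\rangle$ would be dihedral. I would try to contradict this using the odd order of $s_3s_4$ together with the intersection property involving $s_4$. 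The first approach I would try is to track a parity (sign-type) homomorphism on $\langle s_1,s_2,s_3\rangle$ in which $s_1$ and $s_3$ are distinguished. I expect this step to require the most care, and it is where the non-degeneracy hypothesis should enter.
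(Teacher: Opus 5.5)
The paper gives no proof of this lemma; it imports it from Brooksbank--Leemans, so your argument has to stand on its own. Most of it does.
\begin{itemize}
\item Since $(s_1s_3s_4)^m=s_1$ for $m=|s_3s_4|$ odd, the new generators generate $\Gamma$.
\item The same identity gives $\langle s_1s_3,s_4,\dots,s_r\rangle=\langle s_1\rangle\times\langle s_3,\dots,s_r\rangle$, and, for $r\ge5$, $\langle s_2,s_1s_3,s_4,\dots,s_{r-1}\rangle=\langle s_1,\dots,s_{r-1}\rangle$.
\item Your intersection computation and the induction via Lemma~\ref{ip}(1) are correct for $r\ge5$.
\end{itemize}
One point in the check that $\{s_1s_3,s_4,\dots,s_r\}$ is a C-string needs saying explicitly. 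The parabolics containing $s_1s_3$ but not $s_4$ are the ``diagonal'' groups $\langle s_1s_3\rangle\times\langle s_j:j\in J\rangle$ with $J\subseteq\{5,\dots,r\}$, not products with $\langle s_1\rangle$. They are handled by reading off the exponent of $s_1s_3$ from the $\langle s_1\rangle$-coordinate.

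The genuine gap is the base case $r=4$, which you leave open. It is the only place where non-degeneracy enters. Your suggested tools, a sign homomorphism and the odd order of $s_3s_4$, are not what is needed; $|s_3s_4|$ plays no role in this step.

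The missing idea is the structure of centralizers in dihedral groups. Suppose $s_1\in D=\langle s_2,s_1s_3\rangle$.
\begin{itemize}
\item Then $D=\langle s_1,s_2,s_3\rangle$, which has order at least $2|s_1s_2|\ge 6$. So $D$ is dihedral of order $2n$ with $n\ge3$.
\item The element $b=s_1s_3$ is one of the two generating involutions of $D$, so it is a non-central reflection.
\item Hence $C_D(b)$ is $\langle b\rangle$, or $\langle b,z\rangle$ with $z$ the central involution; in either case it has order at most $4$.
\item But $1,b,s_1,s_3$ are four distinct elements of $C_D(b)$. So $z\in\{s_1,s_3\}$, that is, $s_1$ or $s_3$ commutes with $s_2$, contradicting non-degeneracy.
\end{itemize}
Therefore $s_1\notin D$, and so $s_3\notin D$ as well. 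It follows that $D\cap\langle s_1s_3,s_4\rangle\le D\cap\langle s_1,s_3\rangle=\langle s_1s_3\rangle$, which finishes the base case.

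Non-degeneracy is genuinely needed here. Take $C_2\times\mathrm{Sym}(4)$ with $s_1$ the central involution and $s_2,s_3,s_4$ the transpositions $(1,2),(2,3),(3,4)$. Then $s_2s_1s_3$ has order $6$, so $\langle s_2,s_1s_3\rangle=\langle s_1\rangle\times\langle s_2,s_3\rangle$ contains $s_1$, and the reduced string fails the intersection property.
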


\begin{proof}[Proof of Corollary \ref{notsomain}]
It follows from Theorem \ref{main} that $G$ admits a rank five C-string and we now apply Lemma \ref{rankred} to its dual $\{t_5,t_4,t_3,t_2,t_1\}$. Taking $s_i=t_{6-i}$ for every $1 \le i \le 5$, we observe that $s_3s_4=t_3t_2$ has odd order three by Lemma \ref{ord} and hence, $\{t_4,t_3t_5,t_2,t_1\}$ is a rank four C-string of $G$. Applying Lemmas \ref{ord} and \ref{rankred} again, since $t_2t_1$ has odd order $q+1$, it follows that $\{t_3t_5,t_2t_4,t_1\}$ is a rank three C-string of $G$. A direct calculation gives $|t_4t_3t_5|=12$, $|t_3t_5t_2|=6$ and $|t_3t_5t_2t_4|=8$. Finally, since $t_4$ commutes with $t_1$ and $t_2$, and $t_2t_1$ has odd order $q+1$, we also have $|(t_2t_4)t_1|=2(q+1)$, which gives the stated Schl\"{a}fli types.
\end{proof}

\begin{rem}
Gvozdev and Nuzhin \cite{GvozdevNuzhin-g2} constructed the ranks three and four C-strings of $G$ without appealing to the rank reduction theorem.
\end{rem}

\section{Computational Results and Concluding Remarks} \label{conclusion}

We performed an exhaustive computational search for string C-group representations of $G_2(q)$ of ranks four and above for every prime power $3 \le q \leqslant 11$; a census is given in Table \ref{data}. Moreover, for $3 \le q \le 9$, a detailed description of the rank five string C-group representations of $G_2(q)$ when $q$ is even and the rank four string C-group representations of $G_2(q)$ when $q$ is odd are given in the Appendix as Tables \ref{g24}--\ref{g29}; we omit describing $q=11$ owing to the large number of examples, but nevertheless the examples are available from the author upon request.


\begin{table}[h]
\centering
\caption{Number of string C-group representations of $G_2(q)$ of ranks four and above (up to isomorphism and duality) for every prime power $3 \le q \leqslant 11$.}
\begin{tabular}{c|c|c|c} \label{data}
$q$ & Rank 4 & Rank 5 & Rank $\geqslant 6$ \\
\hline
3  & 1 & 0 & 0 \\
4  & 67 & 14 & 0 \\
5  & 16 & 0 & 0 \\
7  & 21 & 0 & 0 \\
8  & 308 & 4 & 0 \\
9  & 23 & 0 & 0 \\
11 & 105 & 0 & 0 \\
\end{tabular}
\end{table}

The uniform construction given in Theorem \ref{main} appears to be somewhat exceptional among the rank five C-strings of $G_2(q)$. Indeed, 12 of the 14 rank five C-strings of $G_2(4)$ arise from rank four C-strings of the sporadic simple group $J_2$, an exceptional maximal subgroup of $G_2(4)$. The remaining two examples for $q=4$ and all four examples for $q=8$ belong to two uniform families. In both cases, $\langle t_1,t_2,t_3,t_4 \rangle \cong \mathrm{SL}_3(q):2$ and $\langle t_2,t_3,t_4,t_5 \rangle \cong G_2(2)$, while $\langle t_1,t_2,t_3 \rangle \cong q^2:D_{q\pm1}$ or $\mathrm{SL}_2(q)$; the former case corresponds to the construction given by Gvozdev and Nuzhin \cite{GvozdevNuzhin-g2}, whereas latter case corresponds to the construction given in this paper. This suggests that five might be the maximal rank of a string C-group representation of $G_2(q)$ in even characteristic.

On the other hand, we obtained only rank four C-strings for odd prime powers $q\le 11$, and these examples exhibit considerably less uniform behavior. Nevertheless, it is straightforward to obtain an upper bound on the rank. Recall that in any rank $r$ C-string, the $\lceil{\frac{r}{2}}\rceil$ alternating generators commute by the string property and are independent by the intersection property. Since $G_2(q)$ has 2-rank 3 when $q$ is odd (see, for example, \cite[Theorem 4.10.5]{GorensteinLyonsSolomon-cfsgbible-vol3}), it follows that $r \le 6$.

We close with two natural questions.

\begin{qn}
When $q$ is even, are three, four, and five the only possible ranks of C-strings of $G_2(q)$? That is, does $G_2(q)$ admit no C-strings of rank six or above?
\end{qn}

\begin{qn}
When $q$ is odd, what is the general behavior of C-strings of $G_2(q)$? In particular, does $G_2(q)$ admit a rank five or six C-string?
\end{qn}

\section*{Acknowledgments}

The author thanks Peter Rowley for suggesting this research problem and James Bryden for several helpful discussions.

\section*{Appendix: Descriptions of String C-Group Representations of $G_2(q)$}

Throughout, $G_{i_1,\dots,i_k}$ denotes the parabolic subgroup $\langle t_{i_1},\dots,t_{i_k}\rangle$. The tables record the isomorphism types of these subgroups, as returned by the \texttt{GroupName} command in \textsc{Magma} \cite{magma}. The examples are ordered lexographically by Schl\"{a}fli type, with self-dual string C-group representations grouped together at the end; for such cases $G_{123} \cong G_{234}$ and so we record the corresponding entries in a shared column. We also group together non-isomorphic string C-group representations with identical Schl\"{a}fli types and parabolic subgroups for compactness. Finally, we note that $\mathrm{SO}_3(7) \cong \mathrm{PSL}_2(7):2 \cong \mathrm{PGL}_2(7)$.

For $q=3$, there is a unique rank four string C-group representation of $G_2(q)$ up to isomorphism and duality. It has Schl\"{a}fli type $[3,8,3]$ and $G_{123}\sim \mathrm{PSL}_3(3).2$ and $G_{234} \sim \mathrm{SO}_3(7)$. The remaining tables are listed in increasing order of $q$.

\begin{table}
\caption{Complete list of the 14 rank five string C-group representations of $G_2(4)$ up to isomorphism and duality.} \label{g24}
\begin{tabular}{C|C|C|C|C|C|C}
 & \text{Schl\"{a}fli type} & G_{1234} & G_{2345} & G_{123} & G_{234} & G_{345} \\ \hline
1 & [3,3,8,3] & \mathrm{SL}_3(4).2 & J_2 & S_4 & 3.A_6.2 & \mathrm{SL}_2(3).2.2 \\ 
2,3 & [3,3,8,5] & \mathrm{SL}_3(4).2 & J_2 & S_4 & 3.A_6.2 & 2.2^4.D_5 \\ 
4 & [3,5,8,3] & \mathrm{SL}_3(4).2 & J_2 & \mathrm{SL}_2(4) & 3.A_6.2 & \mathrm{SL}_2(3).2.2 \\ 
5,6 & [3,5,8,5] & \mathrm{SL}_3(4).2 & J_2 & \mathrm{SL}_2(4) & 3.A_6.2 & 2.2^4.D_5 \\ 
7 & [5,3,8,3] & \mathrm{SL}_3(4).2 & G_2(2) & \mathrm{SL}_2(4) & \mathrm{SO}_3(7) & \mathrm{SL}_2(3).2.2 \\ 
8 & [5,3,8,3] & \mathrm{SL}_3(4).2 & J_2 & \mathrm{SL}_2(4) & 3.A_6.2 & \mathrm{SL}_2(3).2.2 \\ 
9,10 & [5,3,8,5] & \mathrm{SL}_3(4).2 & J_2 & \mathrm{SL}_2(4) & 3.A_6.2 & 2.2^4.D_5 \\ 
11 & [5,4,8,5] & \mathrm{SL}_3(4).2 & G_2(2) & 2^4.D_5 & \mathrm{SO}_3(7) & \mathrm{SL}_2(3).2.2 \\
12 & [5,5,8,3] & \mathrm{SL}_3(4).2 & J_2 & 2^4.D_5 & 3.A_6.2 & \mathrm{SL}_2(3).2.2 \\ 
13,14 & [5,5,8,5] & \mathrm{SL}_3(4).2 & J_2 & 2^4.D_5 & 3.A_6.2 & 2.2^4.D_5 \\ 
\end{tabular}
\end{table}

\begin{table}
\caption{Complete list of the 16 rank four string C-group representations of $G_2(5)$ up to isomorphism and duality.} \label{g25}
\begin{tabular}{C|C|C|C}
 & \text{Schl\"{a}fli type} & G_{123} & G_{234} \\ \hline
1 & [3,8,5] & \mathrm{SL}_3(5).2 & 3.A_6.2 \\  
2 & [3,8,5] & 3.\mathrm{SO}_3(7) & \mathrm{SL}_3(5).2 \\  
3 & [3,8,6] & 3.\mathrm{SO}_3(7) & \mathrm{SL}_3(5).2 \\  
4 &[3,10,4] & 3 \times A_5 & 5^2.\mathrm{He}_5.D_4 \\  
5 & [3,10,4] & 3.A_6.2^2 & 5^2.\mathrm{He}_5.D_6 \\  
6 & [3,10,5] & 5^2.\mathrm{He}_5.D_6 & 3.A_6.2 \\  
7 & [4,8,5] & \mathrm{SO}_3(7) & \mathrm{SL}_3(5).2 \\  
8 & [4,10,4] & 5^2.\mathrm{He}_5.D_4 & 3.A_7.2 \\  
9 & [4,10,6] & 5^2.\mathrm{He}_5.D_4 & 5^2.\mathrm{He}_5.D_6 \\  
10 & [5,6,6] & 3.A_7.2 & \mathrm{SL}_3(5).2 \\  
11 & [5,10,6] & 3.A_7.2 & 5^2.\mathrm{He}_5.D_6 \\  
12 & [6,6,6] & \mathrm{SL}_3(5).2 & \mathrm{SO}_3(7) \\  
13 & [3,10,3] & \multicolumn{2}{C}{5^2.\mathrm{He}_5.D_6} \\  
14 & [5,15,5] & \multicolumn{2}{C}{\mathrm{SL}_2(5).A_5} \\  
15 & [6,10,6] & \multicolumn{2}{C}{(5 \times \mathrm{He}_5):D_6} \\  
16 & [15,5,15] & \multicolumn{2}{C}{\mathrm{SL}_2(5).A_5} \\  
\end{tabular}
\end{table}

\begin{table}
\caption{Complete list of the 21 rank four string C-group representations of $G_2(7)$ up to isomorphism and duality.} \label{g27}
\begin{tabular}{C|C|C|C}
 & \text{Schl\"{a}fli type} & G_{123} & G_{234} \\ \hline
1 & [3,6,7] & 8^2.D_6 & \mathrm{SL}_3(7).2 \\ 
2 & [3,6,7] & S_3 \times S_4 & \mathrm{SU}_3(7).2 \\ 
3 & [3,8,6] & \mathrm{SO}_3(7) & Q_{16}.\mathrm{SO}_3(7) \\ 
4 & [3,14,4] & 7^2.\mathrm{He}_7.D_6 & 7^2.\mathrm{He}_7.D_4 \\ 
5 & [3,14,8] & S_3 \times \mathrm{SO}_3(7) & 7^2.\mathrm{He}_7.D_8 \\ 
6 & [6,6,8] & \mathrm{SO}_3(7) & \mathrm{SU}_3(7).2 \\ 
7 & [6,14,8] & (7 \times \mathrm{He}_7):D_6 & \mathrm{He}_7.7^2.D_8 \\ 
8 & [7,8,8] & \mathrm{SL}_2(7).\mathrm{SO}_3(7) & Q_{16}.\mathrm{SO}_3(7) \\ 
9 & [7,21,8] & \mathrm{SL}_2(7).\mathrm{SO}_3(7) & \mathrm{SL}_2(7).\mathrm{SO}_3(7) \\ 
10 & [8,3,8] & \mathrm{SO}_3(7) & 2 \times \mathrm{SO}_3(7) \\ 
11 & [8,7,21] & \mathrm{SL}_2(7).\mathrm{SO}_3(7) & \mathrm{SL}_2(7).\mathrm{SO}_3(7) \\ 
12 & [8,8,21] & Q_{16}.\mathrm{SO}_3(7) & \mathrm{SL}_2(7).\mathrm{SO}_3(7) \\ 
13 & [6,8,6] & \multicolumn{2}{C}{Q_{16}.\mathrm{SO}_3(7)}  \\ 
14 & [6,14,6] & \multicolumn{2}{C}{7^2.\mathrm{He}_7.D_6}  \\ 
15 & [7,8,7] & \multicolumn{2}{C}{\mathrm{SL}_2(7).\mathrm{SO}_3(7)}  \\ 
16 & [8,6,8] & \multicolumn{2}{C}{Q_{16}.\mathrm{SO}_3(7)}  \\ 
17 & [8,7,8] & \multicolumn{2}{C}{\mathrm{SL}_2(7).\mathrm{SO}_3(7)}  \\ 
18 & [8,14,8] & \multicolumn{2}{C}{\mathrm{He}_7.7^2.D_8}  \\ 
19 & [8,14,8] & \multicolumn{2}{C}{7^2.\mathrm{He}_7.D_8}  \\ 
20 & [8,21,8] & \multicolumn{2}{C}{\mathrm{SL}_2(7).\mathrm{SO}_3(7)}  \\ 
21 & [21,8,21] & \multicolumn{2}{C}{\mathrm{SL}_2(7).\mathrm{SO}_3(7)}  \\ 
\end{tabular}
\end{table}

\begin{table}
\caption{Complete list of the 4 rank five string C-group representations of $G_2(8)$ up to isomorphism and duality.} \label{g28}
\begin{tabular}{C|C|C|C|C|C|C}
 & \text{Schl\"{a}fli type} & G_{1234} & G_{2345} & G_{123} & G_{234} & G_{345} \\ \hline
1 & [7,3,8,3] & \mathrm{SL}_3(8).2 & G_2(2) & \mathrm{SL}_2(8) & \mathrm{SO}_3(7) & \mathrm{SL}_2(3).2.2 \\  
2 & [7,4,8,3] & \mathrm{SL}_3(8).2 & G_2(2) & 2^6.D_7 & \mathrm{SO}_3(7) & \mathrm{SL}_2(3).2.2 \\  
3 & [9,3,8,3] & \mathrm{SL}_3(8).2 & G_2(2) & \mathrm{SL}_2(8) & \mathrm{SO}_3(7) & \mathrm{SL}_2(3).2.2 \\  
4 & [9,4,8,3] & \mathrm{SL}_3(8).2 & G_2(2) & 2^6.D_9 & \mathrm{SO}_3(7) & \mathrm{SL}_2(3).2.2 \\  
\end{tabular}
\end{table}

\begin{table}
\caption{Complete list of the 23 rank four string C-group representations of $G_2(9)$ up to isomorphism and duality.} \label{g29}
\begin{tabular}{C|C|C|C}
 & \text{Schl\"{a}fli type} & G_{123} & G_{234} \\ \hline
1 & [4,6,5] & 3.S_3 \wr 2 & \mathrm{SU}_3(9).2 \\ 
2 & [4,6,5] & \mathrm{SU}_3(9).2 & 3^2.3^4.D_{10} \\ 
3 & [4,6,8] & 3.S_3 \wr 2 & \mathrm{SU}_3(9).2 \\ 
4 & [4,6,8] & \mathrm{SU}_3(9).2 & 3^2.3^4.D_8 \\ 
5 & [4,6,10] & 3.S_3 \wr 2 & \mathrm{SL}_3(9).2 \\ 
6 & [4,6,10] & \mathrm{SL}_3(9).2 & 3^2.3^4.D_{10} \\ 
7 & [5,6,5] & 3^2.3^4.D_{10} & \mathrm{SL}_3(9).2 \\ 
8 & [5,6,8] & 3^2.3^4.D_{10} & \mathrm{SL}_3(9).2 \\ 
9 & [5,6,8] & 3^2.3^4.D_{10} & \mathrm{SU}_3(9).2 \\ 
10 & [5,6,8] & \mathrm{SL}_3(9).2 & 3^2.3^4.D_8 \\ 
11 & [5,6,8] & \mathrm{SU}_3(9).2 & 3^2.3^4.D_8 \\ 
12 & [5,6,10] & 3^2.3^4.D_{10} & \mathrm{SL}_3(9).2 \\ 
13 & [5,6,10] & 3^2.3^4.D_{10} & \mathrm{SU}_3(9).2 \\ 
14 & [5,6,10] & 3^2.3^4.D_{10} & \mathrm{SU}_3(9).2 \\ 
15 & [5,6,10] & \mathrm{SL}_3(9).2 & 3^2.3^4.D_{10} \\ 
16 & [5,6,10] & \mathrm{SU}_3(9).2 & 3^2.3^4.D_{10} \\ 
17 & [8,6,8] & 3^2.3^4.D_8 & \mathrm{SL}_3(9).2 \\ 
18 & [8,6,8] & 3^2.3^4.D_8 & \mathrm{SU}_3(9).2 \\ 
19 & [8,6,10] & 3^2.3^4.D_8 & 3^2.3^4.D_{10} \\ 
20 & [8,6,10] & \mathrm{SU}_3(9).2 & 3^2.3^4.D_{10} \\ 
21 & [8,6,10] & 3^2.3^4.D_8 & \mathrm{SU}_3(9).2 \\ 
22 & [10,6,10] & 3^2.3^4.D_{10} & \mathrm{SL}_3(9).2 \\ 
23 & [5,6,5] & \multicolumn{2}{C}{3^2.3^4.D_{10}} \\ 
\end{tabular}
\end{table}

We highlight that the rank four string C-group representations of $G_2(9)$ involves only a small number of configurations for its parabolic subgroups. We summarize the frequency with which each unordered pair $\{G_{123},G_{234}\}$ occur in Table \ref{g29a}.

\begin{table}[ht] \centering \caption{Configurations of parabolic subgroups among the 23 rank four string C-group representations of $G_2(9)$ up to isomorphism and duality.} \label{g29a} \begin{tabular}{C|C} \hline 
\{G_{123},G_{234}\} & \text{Number} \\ \hline 
\{3.S_3\wr2,\mathrm{SL}_3(9).2\} & 1 \\ 
\{3.S_3\wr2,\mathrm{SU}_3(9).2\} & 2 \\ 
\{3^2.3^4.D_8,3^2.3^4.D_{10}\} & 1 \\ 
\{3^2.3^4.D_8,\mathrm{SL}_3(9).2\} & 2 \\ 
\{3^2.3^4.D_8,\mathrm{SU}_3(9).2\} & 4 \\
\{3^2.3^4.D_{10},3^2.3^4.D_{10}\} & 1 \\ 
\{3^2.3^4.D_{10},\mathrm{SL}_3(9).2\} & 6 \\ 
\{3^2.3^4.D_{10},\mathrm{SU}_3(9).2\} & 6 \\ 
\end{tabular} \end{table}

\end{document}